\DeclareFontFamily{U}{mathx}{\hyphenchar\font45}
\DeclareFontShape{U}{mathx}{m}{n}{
      <5> <6> <7> <8> <9> <10>
      <10.95> <12> <14.4> <17.28> <20.74> <24.88>
      mathx10
      }{}
\DeclareSymbolFont{mathx}{U}{mathx}{m}{n}
\DeclareMathSymbol{\bigplus}        {1}{mathx}{"90}
\DeclareMathSymbol{\bigtimes}       {1}{mathx}{"91}
\theoremstyle{plain}
\newtheorem {lemma}{Lemma}
\newtheorem {theorem}[lemma]{Theorem}
\newtheorem {keylemma}[lemma]{Key Lemma}
\theoremstyle{definition}
\newtheorem{definition}[lemma]{Definition}
\newtheorem{remark}[lemma]{Remark}
\newtheorem {example}[lemma]{Example}
\newcommand{\R}{\mathbb{R}}
\newcommand{\N}{\mathbb{N}}
\newcommand{\Z}{\mathbb{Z}}
\newcommand{\X}{\langle X \rangle}
\def\Oo{\mathcal{O}}
\newcommand{\GKdim}{\operatorname{GKdim}}
\newcommand{\nod}{\operatorname{nod}}
\newcommand{\fin}{\operatorname{fin}}
\newcommand{\id}{\operatorname{id}}
\title[WLpas that are isomorphic to Lpas]{Weighted Leavitt path algebras that are isomorphic to unweighted Leavitt path algebras}
\author{Raimund Preusser}
\email{raimund.preusser@gmx.de}
\date{}
\subjclass[2000]{16S10, 16W10, 16W50, 16D70} 
\keywords{Leavitt path algebra, weighted Leavitt path algebra}
\begin{document}

\begin{abstract} 
Let $K$ be a field. We characterise the row-finite weighted graphs $(E,w)$ such that the weighted Leavitt path algebra $L_K(E,w)$ is isomorphic to an unweighted Leavitt path algebra. Moreover, we prove that if $L_K(E,w)$ is locally finite, or Noetherian, or Artinian, or von Neumann regular, or has finite Gelfand-Kirillov dimension, then $L_K(E,w)$ is isomorphic to an unweighted Leavitt path algebra.
\end{abstract}

\maketitle

\section{Introduction}
In a series of papers \cite{vitt56, vitt57, vitt62, vitt65} William Leavitt studied algebras that are now denoted by $L_K(n,n+k)$ and have been coined Leavitt algebras. Let $X=(x_{ij})$ and $Y=(y_{ji})$ be $(n+k)\times n$ and $n\times (n+k)$ matrices consisting of symbols $x_{ij}$ and $y_{ji}$, respectively. Then for a field $K$, $L_K(n,n+k)$ is the unital $K$-algebra generated by all $x_{ij}$ and $y_{ji}$ subject to the relations $XY=I_{n+k}$ and $YX=I_n$. The algebra $L_K(n,n+k)$ can be described as the $K$-algebra $A$ with a universal left $A$-module isomorphism $A^n\rightarrow A^{n+k}$, cf. \cite[second paragraph on p. 35]{bergman74}.

(Unweighted) Leavitt path algebras are algebras associated to directed graphs. They were introduced by G. Abrams and G. Aranda Pino in 2005 \cite{aap05} and independently by P. Ara, M. Moreno and E. Pardo in 2007 \cite{Ara_Moreno_Pardo}. For the directed graph
\[
\xymatrix{
& \bullet\ar@{.}@(l,d) \ar@(ur,dr)^{e^{(1)}} \ar@(r,d)^{e^{(2)}} \ar@(dr,dl)^{e^{(3)}} \ar@(l,u)^{e^{(k+1)}}& 
}
\]
with one vertex and $k+1$ loops one recovers the Leavitt algebra $L_K(1,k+1)$. The definition and the development of the theory were inspired on the one hand by Leavitt's construction of $L_K(1,k+1)$ and on the other hand by the Cuntz algebras $\Oo_n$ \cite{cuntz1} and the Cuntz-Krieger algebras in $C^*$-algebra theory \cite{raeburn}. The Cuntz algebras and later Cuntz-Krieger type $C^*$-algebras revolutionised $C^*$-theory, leading ultimately to the astounding
Kirchberg-Phillips classification theorem~\cite{phillips}. The Leavitt path algebras have created the same type of stir in the algebraic community.
 
There have been several attempts to introduce a generalisation of the Leavitt path algebras which would cover the algebras $L_K(n,n+k),~n\geq 2$ as well. In 2013, R. Hazrat \cite{hazrat13} introduced weighted Leavitt path algebras. These are algebras associated to weighted graphs. For the weighted graph 
\[
\xymatrix{
& \bullet \ar@{.}@(l,d) \ar@(ur,dr)^{e^{(1)},n} \ar@(r,d)^{e^{(2)},n} \ar@(dr,dl)^{e^{(3)},n} \ar@(l,u)^{e^{(n+k)},n}& 
}
\]
with one vertex and $n+k$ loops of weight $n$ one recovers the Leavitt algebra $L_K(n,n+k)$. If the weights of all the edges are $1$, then the weighted Leavitt path algebras reduce to the unweighted Leavitt path algebras. 

Which are the new examples in the class of weighted Leavitt path algebras? In \cite{hazrat-preusser} it was shown that any simple or graded simple weighted Leavitt path algebra is isomorphic to an unweighted Leavitt path algebra. In \cite{preusser} and \cite{preusser1} it was shown that any finite-dimensional or Noetherian weighted Leavitt path algebra is isomorphic to an unweighted Leavitt path algebra. Furthermore, graph-theoretic criterions that are sufficient and necessary for $L_K(E,w)$ being finite-dimensional/Noetherian were found (see \cite[Theorems 25 and 52]{preusser1}). On the other hand, it was shown in \cite[Corollary 16]{preusser-1}, that the class of weighted Leavitt path algebras contains infinitely many domains which are neither isomorphic to an unweighted Leavitt path algebra nor to a Leavitt algebra $L_K(n,n+k)$. 

As examples consider the weighted graphs 
\[
(E,w):~\xymatrix@C+15pt{ \bullet& \bullet\ar[l]_{1}\ar[r]^{2}& \bullet},\hspace{0.7cm}(E',w'):~\xymatrix@C+15pt{ \bullet& \bullet\ar@/_1.7pc/[l]_{1}\ar@/^1.7pc/[l]^{2}} \hspace{0.5cm}\text{ and }\hspace{0.5cm} (E'',w''):\quad\xymatrix{ \bullet\ar@(ur,ul)_{1}\ar@(dr,dl)^{2}}~~
\]
where a number above or below an edge indicates the weight of that edge. In \cite[Example 40]{preusser} it was shown that $L_K(E,w)\cong L_K(F)$ where $F$ is the directed graph
\[
F: ~\vcenter{\vbox{
\xymatrix@C+15pt{ &\bullet\ar[d]&\\\bullet& \bullet\ar[l]\ar[r]& \bullet.}
}}
\]
In \cite[Example 21]{preusser-1} it was shown that $L_K(E',w')\cong L_K(F')$ where $F'$ is the directed graph
\vspace{0.7cm}
\[
F':~
\xymatrix@C+15pt{\bullet& \bullet\ar@/^1.7pc/[l]\ar@/_1.7pc/[l]\ar@/^1.7pc/[r]& \bullet.\ar@/^1.7pc/[l]}
\]
$~$\\
But it remained unclear if $L_K(E'',w'')$ is isomorphic to an unweighted Leavitt path algebra. It will follow from the results of this paper that $L_K(E'',w'')$ cannot be isomorphic to an unweighted Leavitt path algebra.

In this paper we obtain a graph-theoretic criterion that is sufficient and necessary for $L_K(E,w)$ being isomorphic to an unweighted Leavitt path algebra (Condition (LPA), cf. Definition \ref{defLPA}). Moreover, we prove that if $L_K(E,w)$ is Artinian, or von Neumann regular, or has finite Gelfand-Kirillov dimension, then $L_K(E,w)$ is isomorphic to an unweighted Leavitt path algebra.

The rest of the paper is organised as follows.

In Section 2 we recall some standard notation which is used throughout the paper.

In Section 3 we recall the definitions of the unweighted and weighted Leavitt path algebras. 

In Section 4 we introduce Condition (LPA).

In Section 5 we prove that if $(E,w)$ is a row-finite weighted graph that satisfies Condition (LPA), then $L_K(E,w)$ is isomorphic to an unweighted Leavitt path algebra. 

In Section 6 we prove that if $(E,w)$ is a row-finite weighted graph that does not satisfy Condition (LPA), then $L_K(E,w)$ is not isomorphic to an unweighted Leavitt path algebra. Moreover, we prove that if $L_K(E,w)$ is Artinian, or von Neumann regular, or has finite Gelfand-Kirillov dimension, then $L_K(E,w)$ is isomorphic to an unweighted Leavitt path algebra. We also prove  again that if $L_K(E,w)$ is locally finite or Noetherian, then $L_K(E,w)$ is isomorphic to an unweighted Leavitt path algebra (that has already been shown in \cite{preusser1}, but the paper was never published in a journal). 

In Section 7 we summarise the main results of this paper.

\section{Notation}
Throughout the paper $K$ denotes a field. By a $K$-algebra we mean an associative (but not necessarily commutative or unital) $K$-algebra. By an ideal we mean a two-sided ideal. $\N$ denotes the set of positive integers, $\N_0$ the set of nonnegative integers, $\Z$ the set of integers and $\R_+$ the set of positive real numbers. 

\section{Unweighted and weighted Leavitt path algebras}

\begin{definition}\label{defdg}
A {\it (directed) graph} is a quadruple $E=(E^0,E^1,s,r)$ where $E^0$ and $E^1$ are sets and $s,r:E^1\rightarrow E^0$ maps. The elements of $E^0$ are called {\it vertices} and the elements of $E^1$ {\it edges}. If $e$ is an edge, then $s(e)$ is called its {\it source} and $r(e)$ its {\it range}.
\end{definition}

\begin{remark}$~$\\
\vspace{-0.6cm}
\begin{enumerate}[(a)]
\item Let $E$ be a graph, $v\in E^0$ a vertex and $e\in E^1$ an edge. Then we say that $v$ {\it emits} $e$ if $s(e)=v$ and $v$ {\it receives} $e$ if $r(e)=v$. 
\item In this article all graphs are assumed to be row-finite. Recall that a graph $E=(E^0,E^1,s,r)$ is called {\it row-finite} if $s^{-1}(v)$ is a finite set for any vertex $v$.
\end{enumerate}
\end{remark}

\begin{definition}\label{deflpa}
Let $E$ be a graph. The $K$-algebra $L_K(E)$ presented by the generating set $\{v,e,e^*\mid v\in E^0,e\in E^1\}$ and the relations
\begin{enumerate}[(i)]
\item $uv=\delta_{uv}u\quad(u,v\in E^0)$,
\medskip
\item $s(e)e=e=er(e),~r(e)e^*=e^*=e^*s(e)\quad(e\in E^1)$,
\medskip
\item $e^*f= \delta_{ef}r(e)\quad(v\in E^0,e,f\in s^{-1}(v))$ and
\medskip
\item $\sum\limits_{e\in s^{-1}(v)}ee^*= v\quad(v\in E^0,s^{-1}(v)\neq\emptyset)$
\end{enumerate}
is called the {\it (unweighted) Leavitt path algebra} of $E$. 
\end{definition}

\begin{remark}\label{remunproplpa}
Let $E$ be a graph and $A$ a $K$-algebra that contains a set $X=\{\alpha_v, \beta_{e}, \gamma_{e}\mid v\in E^0, e\in E^1\}$ such that\\
\vspace{-0.1cm}
\begin{enumerate}[(i)]
\item the $\alpha_v$'s are pairwise orthogonal idempotents, 
\medskip
\item
$\alpha_{s(e)}\beta_{e}=\beta_{e}=\beta_{e}\alpha_{r(e)},~\alpha_{r(e)}\gamma_{e}=\gamma_{e}=\gamma_{e}\alpha_{s(e)}\quad(e\in E^1)$,
\medskip
\item $\gamma_{e}\beta_{f}= \delta_{ef}\alpha_{r(e)}\quad(v\in E^0,e,f\in s^{-1}(v))$ and
\medskip
\item $\sum\limits_{e\in s^{-1}(v)}\beta_{e}\gamma_{e}= \alpha_{v}\quad(v\in E^0,s^{-1}(v)\neq \emptyset)$.
\end{enumerate}
We call $X$ an {\it $E$-family} in $A$. By the relations defining $L_K(E)$, there exists a unique $K$-algebra homomorphism $\phi: L_K(E)\rightarrow A$ such that $\phi(v)=\alpha_v$, $\phi(e)=\beta_{e}$ and $\phi(e^*)=\gamma_{e}$ for all $v\in E^0$ and $e\in E^1$. We will refer to this as the {\it Universal Property of $L_K(E)$}.
\end{remark}

\begin{definition}
A {\it weighted graph} is a pair $(E,w)$ where $E$ is a graph and $w:E^1\rightarrow \N$ is a map. If $e\in E^1$, then $w(e)$ is called the {\it weight} of $e$. For a vertex $v\in E^0$ we set $w(v):=\max\{w(e)\mid e\in s^{-1}(v)\}$ with the convention $\max \emptyset=0$.
\end{definition}


\begin{definition}\label{def3}
Let $(E,w)$ be a weighted graph. The $K$-algebra $L_K(E,w)$ presented by the generating set $\{v,e_i,e_i^*\mid v\in E^0, e\in E^1, 1\leq i\leq w(e)\}$ and the relations
\begin{enumerate}[(i)]
\item $uv=\delta_{uv}u\quad(u,v\in E^0)$,
\medskip
\item $s(e)e_i=e_i=e_ir(e),~r(e)e_i^*=e_i^*=e_i^*s(e)\quad(e\in E^1, 1\leq i\leq w(e))$,
\medskip
\item 
$\sum\limits_{1\leq i\leq w(v)}e_i^*f_i= \delta_{ef}r(e)\quad(v\in E^0,e,f\in s^{-1}(v))$ and
\medskip 
\item $\sum\limits_{e\in s^{-1}(v)}e_ie_j^*= \delta_{ij}v\quad(v\in E^0,1\leq i, j\leq w(v))$
\end{enumerate}
is called the {\it weighted Leavitt path algebra} of $(E,w)$. In relations (iii) and (iv) we set $e_i$ and $e_i^*$ zero whenever $i > w(e)$. 
\end{definition}


\begin{example}\label{exex1}
If $(E,w)$ is a weighted graph such that $w(e)=1$ for all $e \in E^{1}$, then $L_K(E,w)$ is isomorphic to the unweighted Leavitt path algebra $L_K(E)$. 
\end{example}

\begin{example}\label{wlpapp}
Let $n\geq 1$ and $k\geq 0$. Let $(E,w)$ be the weighted graph
\begin{equation*}
\xymatrix{
& v \ar@{.}@(l,d) \ar@(ur,dr)^{e^{(1)},n} \ar@(r,d)^{e^{(2)},n} \ar@(dr,dl)^{e^{(3)},n} \ar@(l,u)^{e^{(n+k)},n}& 
}
\end{equation*}
with one vertex $v$ and $n+k$ edges $e^{(1)},\dots,e^{(n+k)}$ each of which has weight $n$. Then $L_K(E,w)$ is isomorphic to the Leavitt algebra $L_K(n,n+k)$, for details see \cite[Example 5.5]{hazrat13} or \cite[Example 4]{hazrat-preusser}. 
\end{example}

\begin{remark}\label{remunpropwlpa}
Let $(E,w)$ be a weighted graph and $A$ a $K$-algebra that contains a set $X=\{\alpha_v, \beta_{e,i}, \gamma_{e,i}\mid v\in E, e\in E^1, 1\leq i\leq w(e)\}$ such that\\
\vspace{-0.1cm}
\begin{enumerate}[(i)]
\item the $\alpha_v$'s are pairwise orthogonal idempotents, 
\medskip
\item
$\alpha_{s(e)}\beta_{e,i}=\beta_{e,i}=\beta_{e,i}\alpha_{r(e)},~\alpha_{r(e)}\gamma_{e,i}=\gamma_{e,i}=\gamma_{e,i}\alpha_{s(e)}\quad(e\in E^1, 1\leq i\leq w(e))$,
\medskip
\item $\sum\limits_{1\leq i\leq w(v)}\gamma_{e,i}\beta_{f,i}= \delta_{ef}\alpha_{r(e)}\quad(v\in E^0,e,f\in s^{-1}(v))$ and
\medskip
\item $\sum\limits_{e\in s^{-1}(v)}\beta_{e,i}\gamma_{e,j}= \delta_{ij}\alpha_{v}\quad(v\in E^0,1\leq i,j\leq w(v))$.
\end{enumerate}
In relations (iii) and (iv) we set $\beta_{e,i}$ and $\gamma_{e,i}$ zero whenever $i > w(e)$. We call $X$ an {\it $(E,w)$-family} in $A$. By the relations defining $L_K(E,w)$, there exists a unique $K$-algebra homomorphism $\phi: L_K(E,w)\rightarrow A$ such that $\phi(v)=\alpha_v$, $\phi(e_{i})=\beta_{e,i}$ and $\phi(e^*_{i})=\gamma_{e,i}$ for all $v\in E^0$, $e\in E^1$ and $1\leq i\leq w(e)$. We will refer to this as the {\it Universal Property of $L_K(E,w)$}.
\end{remark}

\begin{remark}
Let $(E,w)$ be a weighted graph. Then $L_K(E,w)$ has the properties (a)-(d) below, for details see \cite[Proposition 5.7]{hazrat13}.
\begin{enumerate}[(a)]
\item If $E^0$ is a finite set, then $L_K(E,w)$ is a unital ring (with $\sum\limits_{v\in E^0} v$ as multiplicative identity).
\item $L_K(E,w)$ has a set of local units, namely the set of all finite sums of distinct elements of $E^0$. Recall that an associative ring $R$ is said to have a {\it set of local units} $X$ in case $X$ is a set of idempotents in $R$ having the property that for each finite subset $S\subseteq R$ there exists an $x\in X$ such that $xsx=s$ for any $s\in S$.
\item There is an involution $*$ on $L_K(E,w)$ mapping $k\mapsto k$, $v\mapsto v$, $e_i\mapsto e_i^*$ and $e_i^*\mapsto e_i$ for any $k\in K$, $v\in E^0$, $e\in E^1$ and $1\leq i\leq w(e)$. 
\item Set $n:=\sup\{w(e) \mid e \in E^{1}\}$. One can define a $\mathbb Z^n$-grading on $L_K(E,w)$ by setting $\deg(v):=0$, $\deg(e_i):=\epsilon_i$ and $\deg(e_i^*):=-\epsilon_i$ for any $v\in E^0$, $e \in E^{1}$ and $1\leq i\leq w(e)$. Here $\epsilon_i$ denotes the element of $\mathbb Z^n$ whose $i$-th component is $1$ and whose other components are $0$.
\end{enumerate}
\end{remark}

\section{The Condition (LPA)}
We start with a couple of definitions.
\begin{definition}
Let $E$ be a graph. A {\it path} is a nonempty word $p=x_1\dots x_n$ over the alphabet $E^0\cup E^1$ such that either $x_i\in E^1~(i=1,\dots,n)$ and $r(x_i)=s(x_{i+1})~(i=1,\dots,n-1)$ or $n=1$ and $x_1\in E^0$. By definition, the {\it length} $|p|$ of $p$ is $n$ in the first case and $0$ in the latter case. 
We set $s(p):=s(x_1)$ and $r(p):=r(x_n)$ (here we use the convention $s(v)=v=r(v)$ for any $v\in E^0$).
\end{definition}

\begin{definition}
Let $E$ be a graph and $v\in E^0$. A {\it closed path (based at $v$)} is a path $p$ such that $|p|>0$ and $s(p)=r(p)=v$. A {\it cycle (based at $v$)} is a closed path $p=x_1\dots x_n$ based at $v$ such that $s(x_i)\neq s(x_j)$ for any $i\neq j$. 
\end{definition}

\begin{definition}\label{deftre}
Let $E$ be a graph. If $u,v\in E^0$ and there is a path $p$ in $E$ such that $s(p)=u$ and $r(p)=v$, then we write $u\geq v$. 
If $u\in E^0$, then $T(u):=\{v\in E^0 \mid u\geq v\}$ is called the {\it tree} of $u$. If $X\subseteq E^0$, we define $T(X):=\bigcup\limits_{v\in X}T(v)$. 
Two edges $e,f\in E^1$ are called {\it in line} if $e=f$ or $r(e)\geq s(f)$ or $r(f)\geq s(e)$
\end{definition}



\begin{definition}
Let $(E,w)$ be a weighted graph. An edge $e\in E^{1}$ is called {\it unweighted} if $w(e)=1$ and {\it weighted} if $w(e)>1$. The subset of $E^1$ consisting of all unweighted edges is denoted by $E_{uw}^1$ and the subset consisting of all weighted edges by $E_{w}^1$.
\end{definition}

Now we can introduce Condition (LPA).
\begin{definition}\label{defLPA}
We say that a weighted graph $(E,w)$ {\it satisfies Condition (LPA)} if the following holds true:
\begin{enumerate}[({LPA}1)]
\item Any vertex $v\in E^0$ emits at most one weighted edge.
\item Any vertex $v\in T(r(E^1_w))$ emits at most one edge.
\item If two weighted edges $e,f\in E^1_w$ are not in line, then $T(r(e))\cap T(r(f))=\emptyset$.
\item If $e\in E^1_w$ and $c$ is a cycle based at some vertex $v\in T(r(e))$, then $e$ belongs to $c$.
\end{enumerate}
\end{definition}

Each of the Conditions (LPA1)-(LPA4) in Definition \ref{defLPA} above ``forbids" a certain constellation in the weighted graph $(E,w)$. The pictures below illustrate these forbidden constellations. Symbols above or below edges indicate the weight. A dotted arrow stands for a path.
\begin{enumerate}[({LPA}1)]
\item \[\xymatrix@R-1.5pc{& \bullet\\\bullet \ar_{>1}[dr] \ar^{>1}[ur]& \\& \bullet.}\]
\item \[\xymatrix@R-1.5pc{& & & \bullet\\\bullet \ar^{>1}[r] & \bullet \ar@{.>}[r] & \bullet \ar[dr] \ar[ur]& \\& & & \bullet.}\]
\item \[\xymatrix{\bullet \ar^{>1}[r] & \bullet \ar@{.>}[r] & \bullet &\bullet \ar@{.>}[l] & \bullet. \ar_{>1}[l]}\]
\item \[\xymatrix@R-1pc@C-0.5pc{
		&&&\bullet \ar[r]&\bullet \ar[rd]&\\
		\bullet \ar^{>1}[r] & \bullet \ar@{.>}[r] &\bullet \ar[ru]&&&\bullet. \ar[ld]\\
		&&&\bullet \ar[lu]&\bullet \ar@{.}[l]}	
		\]
\end{enumerate}
\begin{remark}
Conditions (LPA1), (LPA2) and (LPA3) already appeared in \cite{preusser} and \cite{preusser1}. They were independently found by N. T. Phuc. Condition (LPA4) is new, this condition is slightly weaker then Condition (iv) in \cite[Definition 19]{preusser1}.
\end{remark}

\section{Presence of Condition (LPA)}
\begin{lemma}\label{lemLPA}
Let $(E,w)$ be a weighted graph that satisfies Condition (LPA). If $e$ and $f$ are distinct edges such that $s(e), s(f)\in T(r(E^1_w))$, then $r(e)\neq r(f)$.
\end{lemma}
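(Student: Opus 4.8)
The plan is to argue by contradiction: suppose $e\neq f$ but $r(e)=r(f)=:u$, with $s(e),s(f)\in T(r(E^1_w))$, and derive a contradiction from Condition (LPA). First I would record the consequences of (LPA2). Since $s(e),s(f)\in T(r(E^1_w))$, each of these vertices emits at most one edge; as $s(e)$ emits $e$ and $s(f)$ emits $f$, these are the only edges they emit. In particular, if $s(e)=s(f)$ then $e=f$, contrary to assumption, so $s(e)\neq s(f)$.

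Next I would locate a single weighted edge dominating both sources. By definition of $T(r(E^1_w))$ there are weighted edges $g,h\in E^1_w$ with $s(e)\in T(r(g))$ and $s(f)\in T(r(h))$. Since $u=r(e)$ is reachable from $s(e)$ and $u=r(f)$ from $s(f)$, we get $u\in T(r(g))\cap T(r(h))$, so this intersection is nonempty; by the contrapositive of (LPA3) the edges $g,h$ are in line. Unwinding the definition of ``in line'' (and relabelling $e,f$ if necessary, which is harmless by the symmetry of the statement) this yields a single weighted edge, which I again call $g$, such that $r(g)\geq s(e)$ and $r(g)\geq s(f)$, i.e. $s(e),s(f)\in T(r(g))$.

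Now I would exploit that (LPA2) makes the forward dynamics on $T(r(g))$ deterministic. Every vertex of $T(r(g))\subseteq T(r(E^1_w))$ emits at most one edge, so any path issuing from $r(g)$ is a prefix of one maximal walk $v_0=r(g),v_1,v_2,\dots$. Both paths realizing $s(e),s(f)\in T(r(g))$ are such prefixes; after passing $s(e)$ (whose unique outgoing edge is $e$) the walk reaches $u$, and after passing $s(f)$ (whose unique outgoing edge is $f$) it reaches $u$ again. Because $s(e)\neq s(f)$ these two visits occur at distinct indices, so the walk revisits $u$, and extracting a first repetition yields a cycle $c$, based at a vertex of $T(r(g))$, on which $u$ lies. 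Applying (LPA4) to the weighted edge $g$ and the cycle $c$ forces $g\in c$, whence $r(g)=v_0$ is a vertex of $c$; as $v_0$ is the initial vertex of the walk, this means the walk has no transient part and is purely periodic, i.e. a single traversal of $c$. Consequently $s(e)$ and $s(f)$ both lie on $c$, and $e,f$ are edges of $c$ both ending at $u$. Since in a cycle exactly one edge ends at any prescribed vertex, $e=f$, the desired contradiction.

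The step I expect to be the main obstacle is the last one: turning the informal statement ``the deterministic walk revisits $u$'' into a genuine cycle and then using (LPA4) to exclude a transient tail. The delicate point is that (LPA4) pins $r(g)$ onto the cycle, which is exactly what collapses the walk to a single cycle and lets the ``unique incoming edge in a cycle'' observation deliver $e=f$; some care is needed with the indexing when identifying the first repeated vertex and when checking that $e$ and $f$ genuinely appear as edges of $c$ ending at $u$.
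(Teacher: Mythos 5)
Your proof is correct, and it uses the same three ingredients as the paper's proof ((LPA2) to make the dynamics on $T(r(g))$ deterministic, (LPA3) to place $s(e)$ and $s(f)$ in a single tree $T(r(g))$, and (LPA4) to pin $g$ onto any cycle there), but it organises them differently. The paper splits into two cases according to whether some cycle is based at a vertex of $T(r(g))$: if yes, (LPA4) and (LPA2) force $T(r(g))$ to coincide with the vertex set of that cycle and the conclusion follows as in your argument; if no, it simply observes that by (LPA2) and acyclicity the paths $pe$ and $qf$ from $r(g)$ to the common range must coincide, so $e=f$ with no further work. You instead avoid the case distinction by showing that the negation $e\neq f$ itself manufactures a cycle through $u$ (the deterministic walk from $r(g)$ visits $u$ at two distinct times), and then run the cyclic case. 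Your route is a clean unification, at the cost of the slightly more delicate bookkeeping you flag (first repeated vertex, purely periodic walk); the paper's route trades that for an extra, but very short, acyclic case. One cosmetic slip: at the "in line" step what may need relabelling is $g$ versus $h$ (so that the tree containing both $s(e)$ and $s(f)$ is the one called $T(r(g))$), not $e$ versus $f$; the intended statement and its use are nonetheless correct.
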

\begin{proof}
Let $e,f\in E^1$ such that $s(e), s(f)\in T(r(E^1_w))$ and $r(e)=r(f)$. We will show that $e=f$. Since $s(e), s(f)\in T(r(E^1_w))$, there are $g,h\in E^1_w$ such that $s(e)\in T(r(g))$ and $s(f)\in T(r(h))$. It follows that $r(e)=r(f)\in T(r(g))\cap T(r(h))$. Since $(E,w)$ satisfies Condition (LPA3), $g$ and $h$ are in line. It follows that $s(e), s(f)\in T(r(g))$ or $s(e), s(f)\in T(r(h))$. W.l.o.g. assume that $s(e), s(f)\in T(r(g))$.
\begin{enumerate}
\item[Case 1] Assume that there is a cycle $c$ based at some vertex $v\in T(r(g))$. Since $(E,w)$ satisfies (LPA4), $g$ belongs to $c$. Write $c=\alpha^{(1)}\dots\alpha^{(n)}$ where $\alpha^{(1)},\dots,\alpha^{(n)}\in E^1$. Set $x_i:=s(\alpha^{(i)})~(1\leq i\leq n)$. Then, in view of (LPA2), we have $T(r(g))=\{x_1,\dots,x_n\}$. Moreover, each $x_i$ emits precisely one edge, namely $\alpha^{(i)}$. Since $s(e),s(f)\in T(r(g))$, we get that $s(e)=x_i$ and $s(f)=x_j$ for some $1\leq i,j\leq n$. Hence $e=\alpha^{(i)}$ and $f=\alpha^{(j)}$. Since $r(e)=r(f)$, it follows that $i=j$ and hence $e=f$.
\medskip
\item[Case 2] Assume that no cycle is based at a vertex in $T(r(g))$. Since $s(e),s(f)\in T(r(g))$, there are paths $p$ and $q$ such that $s(p)=r(g)=s(q)$, $r(p)=s(e)$ and $r(q)=s(f)$. Clearly $pe$ and $qf$ are paths starting at $r(g)$ and ending at $r(e)=r(f)$. It follows from (LPA2) and the assumption that no cycle is based at a vertex in $T(r(g))$, that $pe=qf$. Hence $e=f$. 
\end{enumerate}
\end{proof}

Recall that if $E$ is a graph, then a vertex $v$ that does not emit any edges is called a {\it sink}.
\begin{lemma}\label{lemkey1}
Let $(E,w)$ be a weighted graph that satisfies Condition (LPA). Then there is a weighted graph $(\tilde E,\tilde w)$ such that the ranges of the weighted edges in $(\tilde E,\tilde w)$ are sinks, no vertex in $(\tilde E,\tilde w)$ emits or receives two distinct weighted edges, and $L_K(\tilde E,\tilde w)\cong L_K(E,w)$.
\end{lemma}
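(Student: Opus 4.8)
The plan is to build $(\tilde E,\tilde w)$ from $(E,w)$ by repeatedly ``pushing'' each weighted edge forward, along the forward path determined by its range, until it points at a sink, checking at each step that the weighted Leavitt path algebra is unchanged up to isomorphism. Two of the three required properties come essentially for free. The condition that no vertex emits two distinct weighted edges is exactly Condition (LPA1), and the condition that no vertex receives two distinct weighted edges already holds in $(E,w)$ itself: if $e\neq f$ were weighted edges with $r(e)=r(f)=:w$, then $w\in T(r(e))\cap T(r(f))$, so by (LPA3) $e$ and $f$ are in line; say $r(f)\geq s(e)$, i.e. $w\geq s(e)$. Since $w\in T(r(E^1_w))$, Condition (LPA2) makes the forward path out of $w$ deterministic, so following it until it first returns to $w$ produces a cycle $c$ based at $w$ through $e$; then (LPA4) applied to $f$ gives $f\in c$, and since the edge of a cycle with a prescribed range is unique we get $e=f$, a contradiction. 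Thus it suffices to arrange that the ranges of weighted edges are sinks, while preserving Condition (LPA) so that the other two properties persist.

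The engine of the construction is a single local move. Suppose $e$ is a weighted edge with $r(e)=v$ not a sink. Since $v\in T(r(E^1_w))$, Condition (LPA2) shows $v$ emits exactly one edge $g$. If $g$ is unweighted, the relations give $gg^{*}=v$ and $g^{*}g=r(g)$, and I would form $(\tilde E,\tilde w)$ by leaving everything unchanged except redirecting $e$ so that its new range is $r(g)$ (keeping the edge $g$). The isomorphism $L_K(E,w)\cong L_K(\tilde E,\tilde w)$ is then produced from the Universal Property in Remark \ref{remunpropwlpa} in both directions: one assigns $e_i\mapsto \tilde e_i g^{*}$ and fixes all other generators, the other assigns $\tilde e_i\mapsto e_i g$; a direct check of relations (i)--(iv), using $gg^{*}=v$, $g^{*}g=r(g)$ and relations (iii),(iv) at $s(e)$, shows these are mutually inverse. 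This move strictly advances the range of $e$ one vertex downstream and, one verifies, preserves Condition (LPA).

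Iterating this move pushes each weighted edge past every unweighted edge it meets. The process stops at a weighted edge whose range is a sink (the desired situation), or whose range emits a weighted edge (a weighted chain), or whose forward path enters a cycle; by (LPA4) such a cycle must contain the weighted edge itself, and by (LPA2) it has no exits. To reach the sink condition in these remaining cases I would perform a more global surgery: cutting a maximal weighted chain, or unrolling a weighted cycle, at the range of the offending weighted edge by introducing a fresh sink vertex there and reattaching the severed downstream part, again proving the resulting algebra isomorphic to the original by exhibiting an $(E,w)$-family and an $(\tilde E,\tilde w)$-family via the Universal Property.

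The main obstacle is precisely this last step. The clean shift move only applies across unweighted edges, and naively merging a weighted chain into one weighted edge, or detaching it to a private sink, changes the algebra; for instance the chain of two weight-$2$ edges into a sink is $M_{7}(K)$, which forces any normalized replacement to have a different vertex- and rank-structure than the original. The delicate point is therefore to design the surgery on weighted chains, on weighted cycles, and on forward paths that never reach a sink, so that simultaneously the weighted Leavitt path algebra is preserved---which must be shown directly from the defining relations rather than by recognizing a matrix algebra, since $(E,w)$ may be infinite---and the output still satisfies Condition (LPA), so that every weighted range is a sink and no vertex emits or receives two distinct weighted edges.
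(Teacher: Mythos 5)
There is a genuine gap, and you in fact point at it yourself: your local move only pushes a weighted edge $e$ past an \emph{unweighted} edge $g$ emitted by $r(e)$, and you leave unresolved exactly the cases where the forward path out of $r(e)$ meets another weighted edge (a weighted chain), closes into a cycle through $e$ (forced by (LPA4)), or never terminates. These are not degenerate corner cases --- the chain $\bullet\xrightarrow{2}\bullet\xrightarrow{2}\bullet$ satisfies Condition (LPA) and your iteration stalls on it immediately --- and the ``global surgery'' you gesture at (cutting the chain or unrolling the cycle at a fresh sink) is, as you yourself observe, liable to change the algebra. In addition, even where the local move applies, an infinite forward path gives a non-terminating iteration, so some limit argument would be needed. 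So the proposal as written does not prove the lemma. (Your preliminary observations are fine: ``no vertex emits two distinct weighted edges'' is (LPA1), and your argument via (LPA2)--(LPA4) that no vertex of $(E,w)$ receives two distinct weighted edges is correct and close to Part II of the paper's proof.)

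The missing idea is that one should not push weighted edges \emph{forward} at all, but rather reverse everything \emph{downstream} of them. The paper sets $Z:=T(r(E^1_w))$ and, in a single step, replaces every edge $e$ with $s(e)\in Z$ by $w(e)$ \emph{unweighted} edges $e^{(1)},\dots,e^{(w(e))}$ running from $r(e)$ back to $s(e)$, leaving all other edges untouched. The isomorphism is then induced by the bijection of generators $e_i\mapsto (e^{(i)}_1)^*$, $e_i^*\mapsto e^{(i)}_1$ at the level of free algebras: under this flip the relations (iii) and (iv) at a vertex $v\in Z$ are exchanged with the relations (iv) and (iii) at $\bar v=r(e)$ (here Lemma \ref{lemLPA}, i.e.\ injectivity of $r$ on edges with source in $Z$, is what guarantees $\tilde s^{-1}(\bar v)=\{e^{(1)},\dots,e^{(w(e))}\}$ so that the generating sets of the two ideals match up). Because every edge with source in $Z$ is reversed --- weighted or not --- this construction disposes of weighted chains, of cycles through weighted edges, and of infinite forward paths uniformly, with no iteration and no termination issue; after the flip a weighted edge $e$ (necessarily with $s(e)\notin Z$) has range $r(e)\in Z$, which emits nothing, since all former edges out of $r(e)$ now point into it. That $*$-symmetry of the defining relations is the device your proposal is missing.
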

\begin{proof}
Set $Z:=T(r(E^1_w))$. Define a weighted graph $(\tilde E,\tilde w)$ by $\tilde E^0=E^0$, $\tilde E^1=\tilde E^1_Z\sqcup \tilde E^1_{Z^c}$ where
\[\tilde E^1_Z=\{e^{(1)},\dots,e^{(w(e))}\mid e\in E^1, s(e)\in Z\}\text{ and }\tilde E^1_{Z^c}=\{e\mid e\in E^1,s(e)\not\in Z\},\]
$\tilde s(e^{(i)})=r(e)$, $\tilde r(e^{(i)})=s(e)$ and $\tilde w(e^{(i)})=1$ for any $e^{(i)}\in \tilde E^1_Z$ and $\tilde s(e)=s(e)$, $\tilde r(e)=r(e)$ and $\tilde w(e)=w(e)$ for any $e\in \tilde E^1_{Z^c}$. We have divided the rest of the proof into three parts. In Part I we show that the ranges of the weighted edges in $(\tilde E,\tilde w)$ are sinks, in Part II we show that no vertex in $(\tilde E,\tilde w)$ emits or receives two distinct weighted edges, and in Part III we show that $L_K(\tilde E,\tilde w)\cong L_K(E,w)$.\\
\\
{\bf Part I} Let $\tilde e\in \tilde E^1_w$. We will show that $\tilde r(\tilde e)$ is a sink in $(\tilde E,\tilde w)$. Clearly $\tilde e\in \tilde E^1_{Z^c}$ since all the edges in $\tilde E^1_{Z}$ have weight one in $(\tilde E,\tilde w)$. Hence there is an $e\in E^1, s(e)\not\in Z$ such that $\tilde e=e$. Clearly $w(e)=\tilde w(e)=\tilde w(\tilde e)>1$. 
Now suppose that there is an $\tilde f\in \tilde E^1$ such that $\tilde s(\tilde f)=\tilde r(\tilde e)$. 
\begin{enumerate}
\item[Case 1] Assume that $\tilde f\in \tilde E^1_Z$. Then there is an $f\in E^1, s(f)\in Z$ and an $i\in\{1,\dots,w(f)\}$ such that $\tilde f=f^{(i)}$ (note that $e\neq f$, since $s(e)\not\in Z$). It follows that $r(e)=\tilde r(e)=\tilde r(\tilde e)=\tilde s(\tilde f)=\tilde s(f^{(i)})=r(f)$. Since $s(f)\in Z=T(r(E^1_w))$, there is a $g\in E^1_w$ such that $s(f)\in T(r(g))$. It follows that $r(f)\in T(r(e))\cap T(r(g))$. Since $(E,w)$ satisfies Condition (LPA3), we get that $e$ and $g$ are in line and hence $e=g$ or $r(e)\geq s(g)$ or $r(g)\geq s(e)$.
\medskip
\begin{enumerate}
\item[Case 1.1] Assume that $e=g$. Since $s(f)\in T(r(g))=T(r(e))$, there is a path $p$ such that $s(p)=r(e)$ and $r(p)=s(f)$. Since $r(f)=r(e)$, we have a closed path $pf$ based at $r(e)$. That implies the existence of a cycle $c$ based at $r(e)$. Since $(E,w)$ satisfies (LPA4), $e$ belongs to $c$ and therefore $s(e)\in T(r(e))$. Now we get the contradiction $s(e)\in T(r(e))\subseteq T(r(E^1_w))=Z$. 
\medskip
\item[Case 1.2] Assume that $r(e)\geq s(g)$. Then there is a path $p$ such that $s(p)=r(e)$ and $r(p)=s(g)$. Since $s(f)\in T(r(g))$, there is a path $q$ such that $s(q)=r(g)$ and $r(q)=s(f)$. Since $r(f)=r(e)$, we have a closed path $pgqf$ based at $r(e)$. Now we can proceed as in Case 1.1 to get a contradiction.
\medskip
\item[Case 1.3] Assume that $r(g)\geq s(e)$. Then we get the contradiction $s(e)\in T(r(g))\subseteq T(r(E^1_w))=Z$. 
\end{enumerate}
\medskip
\item[Case 2] Assume that $\tilde f\in \tilde E^1_{Z^c}$. Then there is an $f\in E^1, s(f)\not\in Z$ such that $\tilde f=f$. It follows that $r(e)=\tilde r(e)=\tilde r(\tilde e)=\tilde s(\tilde f)=\tilde s(f)=s(f)$. Hence we get the contradiction $s(f)=r(e)\in T(r(E^1_w))\subseteq Z$.
\end{enumerate}
Thus the ranges of the weighted edges in $(\tilde E,\tilde w)$ are sinks. \\
\\
{\bf Part II} Assume that there are distinct $\tilde e,\tilde f\in \tilde E_w^1$ such that $\tilde s(\tilde e)=\tilde s(\tilde f)$. Clearly $\tilde e,\tilde f\in \tilde E^1_{Z^c}$ since all the edges in $\tilde E^1_{Z}$ have weight one in $(\tilde E,\tilde w)$. Hence there are distinct $e,f\in E^1, s(e),s(f)\not\in Z$ such that $\tilde e=e$ and $\tilde f=f$. It follows that $s(e)=\tilde s(e)=\tilde s(\tilde e)=\tilde s(\tilde f)=\tilde s(f)=s(f)$ which contradicts the assumption that $(E,w)$ satisfies Condition (LPA1) (note that $w(e)=\tilde w(\tilde e)>1$ and $w(f)=\tilde w(\tilde f)>1$). Thus no vertex emits two distinct weighted edges in $(\tilde E,\tilde w)$.\\
Now assume that there are distinct $\tilde e,\tilde f\in \tilde E_w^1$ such that $\tilde r(\tilde e)=\tilde r(\tilde f)$. Clearly $\tilde e,\tilde f\in \tilde E^1_{Z^c}$ since all the edges in $\tilde E^1_{Z}$ have weight one in $(\tilde E,\tilde w)$. Hence there are distinct $e,f\in E^1, s(e),s(f)\not\in Z$ such that $\tilde e=e$ and $\tilde f=f$. It follows that $r(e)=\tilde r(e)=\tilde r(\tilde e)=\tilde r(\tilde f)=\tilde r(f)=r(f)$. Since $(E,w)$ satisfies Condition (LPA3), we get that $e$ and $f$ are in line. Since $e$ and $f$ are distinct, it follows that $r(e)\geq s(f)$ or $r(f)\geq s(e)$. But in the first case we get the contradiction $s(f)\in Z$ and in the second case the contradiction $s(e)\in Z$. Thus no vertex receives two distinct weighted edges in $(\tilde E,\tilde w)$.\\
\\
{\bf Part III} It remains to show that $L_K(\tilde E,\tilde w)\cong L_K(E,w)$. Set $X:=\{v, e_i, e_i^*\mid v\in E^0, e\in E^1, 1\leq i\leq w(e)\}$ and $\tilde X:=\{\tilde v, \tilde e_i, \tilde e_i^*\mid \tilde v\in \tilde E^0, \tilde e\in \tilde E^1, 1\leq i\leq \tilde w(\tilde e)\}$. Let $K\X$ and $K\langle \tilde X\rangle$ be the free $K$-algebras generated by $X$ and $\tilde X$, respectively. Then the bijection $X\rightarrow \tilde X$ mapping 
\begin{alignat*}{2}
v&\mapsto v&\hspace{1cm}&(v\in E^0),\\
e_i&\mapsto (e^{(i)}_{1})^*&&(e\in E^1, s(e)\in Z,1\leq i\leq w(e)),\\
e_i^*&\mapsto e^{(i)}_{1}&&(e\in E^1, s(e)\in Z,1\leq i\leq w(e)),\\
e_i&\mapsto e_i&&(e\in E^1, s(e)\not\in Z,1\leq i\leq w(e)),\\
e_i^*&\mapsto e^*_{i}&&(e\in E^1, s(e)\not\in Z,1\leq i\leq w(e))
\end{alignat*}
induces an isomorphism $\phi:K\X\rightarrow K\langle \tilde X\rangle$. Let $I$ and $\tilde I$ be the ideals of $K\X$ and $K\langle \tilde X\rangle$ generated by the relations (i)-(iv) in Definition~\ref{def3}, respectively 
(hence $L_K(E,w)\cong K\X/I$ and $L_K(\tilde E, \tilde w)\cong K\langle \tilde X\rangle/\tilde I$). In order to show that $L_K(E,w)\cong L_K(\tilde E,\tilde w)$ it suffices to show that $\phi(I)=\tilde I$. Set
\[A^{(i)}:=\big \{uv-\delta_{uv}u \ |  \ u,v\in E^0 \big \},\]
\[A^{(ii)}:=\big \{s(e)e_i-e_i,~ e_ir(e)-e_i, ~r(e)e_i^*-e_i^*,~ e_i^*s(e)-e^*_i \ | \ e\in E^{1}, 1\leq i\leq w(e)\big \},\]
and for any $v\in E^0$ 
\[A^{(iii)}_v:=\Big\{\sum\limits_{1\leq i \leq w(v)}e_i^{*}f_i-\delta_{ef}r(e)\ | \ e,f\in s^{-1}(v)\Big\}\]
and
\[A^{(iv)}_v:=\Big\{\sum\limits_{e\in s^{-1}(v)}e_ie_j^*-\delta_{ij}v \ |  \ 1\leq i,j\leq w(v)\Big\}.\] 
Then $I$ is generated by $A^{(i)}$, $A^{(ii)}$, the $A^{(iii)}_v$'s and the $A^{(iv)}_v$'s. Analogously define subsets $B^{(i)},B^{(ii)},B^{(iii)}_v~(v\in \tilde E^0),B^{(iv)}_v~(v\in \tilde E^0)$ of $K\langle \tilde X\rangle$. Then $\tilde I$ is generated by $B^{(i)}$, $B^{(ii)}$, the $B^{(iii)}_v$'s and the $B^{(iv)}_v$'s. Clearly $\phi(A^{(i)})=B^{(i)}$ and $\phi(A^{(ii)})=B^{(ii)}$. One checks easily that $\phi(A_v^{(iii)})=B_v^{(iii)}$ and $\phi(A_v^{(iv)})=B_v^{(iv)}$ if $v\not\in Z$. \\
Let now $v\in Z$ be not a sink in $(E,w)$ (if $v\in Z$ is a sink in $(E,w)$, then $A_v^{(iii)}=A_v^{(iv)}=\emptyset$). Then we have $s^{-1}(v)=\{e\}$ for some $e\in E^1$ since $(E,w)$ satisfies Condition (LPA2). Set $\bar v:=r(e)$. Clearly
\[A^{(iii)}_v=\Big\{\sum\limits_{1\leq i\leq w(e)}e_i^{*}e_i-\bar v\Big\}\]
and
\[A^{(iv)}_v=\big \{e_ie_j^*-\delta_{ij}v  \  | \  1\leq i,j\leq w(e) \big \}.\]
It follows from Lemma \ref{lemLPA} that $\tilde s^{-1}(\bar v)=\{e^{(1)},\dots,e^{(w(e))}\}$. Hence 
\[B^{(iii)}_{\bar v}=\big  \{(e^{(i)}_1)^{*}e_1^{(j)}-\delta_{ij}v\  | \  1\leq i,j\leq w(e) \big  \}\]
and
\[B^{(iv)}_{\bar v}=\Big\{\sum\limits_{1\leq i\leq w(e)}e^{(i)}_1(e^{(i)}_1)^*-\bar v\Big\}\]
Clearly $\phi(A^{(iii)}_v)=B^{(iv)}_{\bar v}$ and $\phi(A^{(iv)}_v)=B^{(iii)}_{\bar v}$. It follows from Lemma \ref{lemLPA} that the map $~\bar{}~: v\mapsto \bar v$ defines a bijection between the elements of $Z$ that are not a sink in $(E,w)$ and the elements of $Z$ that are not a sink in $(\tilde E,\tilde w)$. Hence $\phi(I)=\tilde I$ and thus $L_K(\tilde E,\tilde w)\cong L_K(E,w)$.
\end{proof}

\begin{example}\label{exlpa1}
Consider the weighted graph
\[
(E,w):\quad\xymatrix@C+15pt{
t\ar@/^1.7pc/[r]^{a,2}&u\ar@/^1.7pc/[l]^{b,1}& v\ar[l]_{c,1}\ar@(ul,ur)^{d,1}\ar@/^1.9pc/[rr]^{e,1}\ar[r]^{f,2}\ar@/_1.7pc/[r]_{g,1}&x\ar[r]^{h,1}&y\ar[r]^{k,2}&z}.
\]
One checks easily that $(E,w)$ satisfies Condition (LPA) (note that $T(r(E^1_w))=\{t,u,x,y,z\}$). Let $(\tilde E,\tilde w)$ be defined as in the proof of Lemma \ref{lemkey1}. Then $(\tilde E,\tilde w)$ is the weighted graph
 \[
(\tilde E,\tilde w):\quad\xymatrix@C+15pt{
t\ar@/_1.7pc/[r]_{b^{(1)},1}&u\ar@/_1.7pc/[l]_{a^{(1)},1}\ar@/_1.0pc/[l]^{a^{(2)},1}& v\ar[l]_{c,1}\ar@(ul,ur)^{d,1}\ar@/^1.9pc/[rr]^{e,1}\ar[r]^{f,2}\ar@/_1.7pc/[r]_{g,1}&x&y\ar[l]_{h^{(1)},1}&z\ar@/_1.7pc/[l]_{k^{(1)},1}\ar@/^1.7pc/[l]^{k^{(2)},1}
}.
\]
There is only one weighted edge in $(\tilde E,\tilde w)$, namely $f$, and its range is a sink. The proof of Lemma \ref{lemkey1} shows that $L_K(E,w)\cong L_K(\tilde E,\tilde w)$.
\end{example}

\begin{lemma}\label{lemkey2}
Let $(E,w)$ be a weighted graph such that the ranges of the weighted edges are sinks and no vertex emits or receives two distinct weighted edges. Then there is a graph $\tilde E$ such that $L_K(E,w)\cong L_K(\tilde E)$.
\end{lemma}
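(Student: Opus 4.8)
The plan is to exhibit an explicit unweighted graph $\tilde E$ together with a pair of mutually inverse $K$-algebra homomorphisms $\phi\colon L_K(E,w)\to L_K(\tilde E)$ and $\psi\colon L_K(\tilde E)\to L_K(E,w)$, both obtained from the respective universal properties (Remarks~\ref{remunpropwlpa} and~\ref{remunproplpa}). So the real content is (i) to write down the correct $\tilde E$, (ii) to produce an $(E,w)$-family in $L_K(\tilde E)$ and an $\tilde E$-family in $L_K(E,w)$, and (iii) to check that the two induced homomorphisms are inverse to one another on generators. Every computation will rest on one algebraic observation inside $L_K(E,w)$: if $e$ is a weighted edge of weight $n=w(e)=w(s(e))$ (by hypothesis the unique weighted edge emitted by $s(e)$) whose range $u=r(e)$ is a sink, then relation (iii) of Definition~\ref{def3} gives $\sum_{i=1}^{n}e_i^*e_i=u$, and relations (iii) and (iv) together force the elements $p_i:=e_i^*e_i$ to be pairwise orthogonal idempotents summing to $u$, while $e_ie_j^*=\delta_{ij}s(e)$ for $i,j\geq2$ and $e_ie_j^*=0$ whenever exactly one of $i,j$ equals $1$.

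I would build $\tilde E$ locally around each weighted edge. Since the ranges of weighted edges are sinks and no vertex receives two weighted edges, every sink $u$ that is a range of a weighted edge is the range of a unique such edge $e_u$; I split each such $u$ into $n_u=w(e_u)$ new vertices $u^1,\dots,u^{n_u}$ (mirroring $u=\sum_i p_i$) and leave every other vertex unchanged. The weighted edge $e_u\colon v\to u$ is replaced by a single forward edge $v\to u^1$ (its ``first colour'') together with $n_u-1$ \emph{reversed} edges $u^i\to v$ for $2\leq i\leq n_u$; each unweighted edge $g$ whose range is such a split sink $u$ is replaced by $n_u$ parallel edges $s(g)\to u^i$, and all remaining edges are kept as they are. (A split sink is a sink in $E$, hence emits nothing, so sources of edges are never split and the recipe is unambiguous.) On the example $\ell\xleftarrow{c,1}v\xrightarrow{f,2}x$ this reproduces precisely the graph $F$ of the introduction, with $x^1,x^2$ playing the roles of $x$ and the new vertex. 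The associated $(E,w)$-family sends $x\mapsto x$ for unsplit $x$, $u\mapsto\sum_i u^i$ for split sinks, the first colour $e_1$ to the forward edge, the higher colours $e_i$ $(i\geq2)$ to the \emph{adjoints} of the reversed edges, and each unweighted edge with split range to the sum of its parallel copies; the reverse $\tilde E$-family in $L_K(E,w)$ sends $u^i\mapsto p_i$, the forward edge to $e_1$, the reversed edge $u^i\to v$ to $e_i^*$, and a parallel copy $s(g)\to u^i$ to $g_1p_i$.

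The step I expect to require the most care — and the reason the higher colours must be reversed rather than turned into parallel forward edges — is relation (iv) at the source $v$ of a weighted edge. In $L_K(\tilde E)$ the relation $\sum_{a\in s^{-1}(v)}aa^*=v$ sums over \emph{all} edges leaving $v$, so if the higher colours left $v$ each would contribute a term and spoil the weighted relation (iv), which keeps the colours $i=1,\dots,n$ separate. Sending them instead to reversed edges removes them from $s^{-1}(v)$ and realises the required identity $\beta_{e,i}\gamma_{e,i}=(\hat a^i)^*\hat a^i=v$ for $i\geq2$ through relation (iii) at $u^i$. The remaining verifications should then be routine: the orthogonality needed in relation (iv) and in the splitting of incoming edges holds because $u^1,\dots,u^{n}$ are \emph{distinct} vertices, so any product $ab^*$ of two new edges with different ranges vanishes automatically, and relation (iii) yields $\sum_i(\bar g^{\,i})^*\bar g^{\,i}=\sum_i u^i=\alpha_{r(g)}$ for a split unweighted edge $g$. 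Finally, checking $\phi\circ\psi=\id$ and $\psi\circ\phi=\id$ is a direct comparison on the generating sets. I anticipate that the genuine obstacle is not any single identity but the bookkeeping: tracking which vertices are split and carrying the case distinctions $i=1$ versus $i\geq2$, and split versus unsplit range, through all four families of relations in both directions.
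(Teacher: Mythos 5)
Your construction is exactly the one in the paper: you split each range $u$ of a weighted edge into $w(g^u)$ vertices, keep the first colour of the weighted edge as a forward edge, reverse the higher colours, fan out incoming unweighted edges over the split copies, and use the idempotents $(g^u_i)^*g^u_i$ and the elements $g_1(g^u_i)^*g^u_i$ to define the inverse family --- all of which coincides with the paper's definition of $\tilde E$, $\phi$ and $\tilde\phi$. The remaining verifications you defer to bookkeeping are indeed the routine case analysis the paper carries out, so the proposal is correct and takes essentially the same approach.
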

\begin{proof}
If $v\in r(E^1_w)$, then there is a unique edge $g^v\in E^1_w$ such that $r(g^v)=v$ (since no vertex in $(E,w)$ receives two distinct weighted edges). Define a graph $\tilde E$ by 
\begin{align*}
\tilde E^0&=M\sqcup N \text{ where }\\
M&=E^0\setminus r(E^1_w),\\
N&=\{v^{(1)},\dots,v^{(w(g^v))}\mid v\in r(E^1_w)\},\\
\tilde E^1&=A\sqcup B\sqcup C \sqcup D\text{ where }\\
A&=\{e\mid e\in E^1_{uw},r(e)\not\in r(E^1_w)\},\\ 
B&=\{e^{(1)},\dots,e^{(w(g^{r(e)}))}\mid e\in E^1_{uw}, r(e)\in r(E^1_w)\},\\
C&=\{e^{(1)}\mid e\in E^1_{w}\},\\
D&=\{e^{(2)},\dots,e^{(w(e))}\mid e\in E^1_{w}\},\\
\tilde s(e)&=s(e),~\tilde r(e)=r(e)\quad (e\in A),\\
\tilde s(e^{(i)})&=s(e),~\tilde r(e^{(i)})=r(e)^{(i)}\quad (e^{(i)}\in B),\\
\tilde s(e^{(1)})&=s(e),~\tilde r(e^{(1)})=r(e)^{(1)}\quad (e^{(1)}\in C),\\
\tilde s(e^{(i)})&=r(e)^{(i)},~\tilde r(e^{(i)})=s(e)\quad (e^{(i)}\in D),
\end{align*}
(note that if $e\in E^1$, then $s(e)\in E^0\setminus r(E^1_w)$ since the elements of $r(E^1_w)$ are sinks).
We have divided the rest of the proof into three parts. In Part I we define a homomorphism $\phi:L_K(E,w)\rightarrow L_K(\tilde E)$, in Part II we define a homomorphism $\tilde\phi:L_K(\tilde E)\rightarrow L_K(E,w)$, and in Part III we show that $\phi$ and $\tilde\phi$ are inverse to each other.\\
\\
{\bf Part I} Set
\begin{align*}
\alpha_v&:=\begin{cases}v,&\text{ if } v\not\in r(E^1_w),\\\sum\limits_{i=1}^{w(g^v)}v^{(i)},&\text{ if }v\in r(E^1_w),\end{cases}\\
\beta_{e,i}&:=\begin{cases}e,&\text{ if } e\in E_{uw}^1,r(e)\not\in r(E^1_w),i=1,\\\sum\limits_{j=1}^{w(g^{r(e)})}e^{(j)},&\text{ if }e\in E_{uw}^1,r(e)\in r(E^1_w),i=1,\\
e^{(1)},&\text{ if }e\in E_{w}^1,i=1,\\
(e^{(i)})^*,&\text{ if }e\in E_{w}^1,i>1,
\end{cases}\\
\gamma_{e,i}&:=\begin{cases}e^*,&\text{ if } e\in E_{uw}^1,r(e)\not\in r(E^1_w),i=1,\\\sum\limits_{j=1}^{w(g^{r(e)})}(e^{(j)})^*,&\text{ if }e\in E_{uw}^1,r(e)\in r(E^1_w),i=1,\\
(e^{(1)})^*,&\text{ if }e\in E_{w}^1,i=1,\\
e^{(i)},&\text{ if }e\in E_{w}^1,i>1.
\end{cases}
\end{align*}
In order to show that $X:=\{\alpha_v,\beta_{e,i},\gamma_{e_i}\mid v\in E^0,e\in E^1, 1\leq i\leq w(e)\}$ is an $(E,w)$-family in $L_K(\tilde E)$, one has to show that the relations (i)-(iv) in Remark \ref{remunpropwlpa} are satisfied. We leave (i) and (ii) to the reader and show (iii) and (iv).\\ 
First we check (iii). Let $v\in E^0$ and $e,f\in s^{-1}(v)$. We have to show that $\sum\limits_{1\leq i\leq w(v)}\gamma_{e,i}\beta_{f,i}=\delta_{ef}\alpha_{r(e)}$. 
\begin{enumerate}
\item[Case 1] Assume that $e,f\in E^1_{uw}$. 
\medskip
\begin{enumerate}
\item[Case 1.1] Assume that $r(e),r(f)\not\in r(E^1_w)$. Then 
\[\sum\limits_{1\leq i\leq w(v)}\gamma_{e,i}\beta_{f,i}=e^*f=\delta_{ef}\tilde r(e)=\delta_{ef}r(e)=\delta_{ef}\alpha_{r(e)}.\]
\medskip
\item[Case 1.2] Assume that $r(e)\not\in r(E^1_w)$ and $r(f)\in r(E^1_w)$. Then 
\[\sum\limits_{1\leq i\leq w(v)}\gamma_{e,i}\beta_{f,i}=e^*\sum\limits_{j=1}^{w(g^{r(f)})}f^{(j)}=\sum\limits_{j=1}^{w(g^{r(f)})}e^*f^{(j)}=0=\delta_{ef}\alpha_{r(e)}.\]
\medskip
\item[Case 1.3] Assume that $r(e)\in r(E^1_w)$ and $r(f)\not\in r(E^1_w)$. Then 
\[\sum\limits_{1\leq i\leq w(v)}\gamma_{e,i}\beta_{f,i}=\sum\limits_{j=1}^{w(g^{r(e)})}(e^{(j)})^*f=0=\delta_{ef}\alpha_{r(e)}.\]
\medskip
\item[Case 1.4] Assume that $r(e),r(f)\in r(E^1_w)$. Then 
\[\sum\limits_{1\leq i\leq w(v)}\gamma_{e,i}\beta_{f,i}=\sum\limits_{j=1}^{w(g^{r(e)})}(e^{(j)})^*\sum\limits_{k=1}^{w(g^{r(f)})}f^{(k)}=\sum\limits_{j=1}^{w(g^{r(e)})}\sum\limits_{k=1}^{w(g^{r(f)})}(e^{(j)})^*f^{(k)}=\delta_{ef}\sum\limits_{j=1}^{w(g^{r(e)})}r(e)^{(j)}=\delta_{ef}\alpha_{r(e)}.\]
\end{enumerate}
\medskip
\item[Case 2] Assume that $e\in E^1_{uw}$ and $f\in E^1_w$.
\medskip
\begin{enumerate}
\item[Case 2.1] Assume that $r(e)\not\in r(E^1_w)$. Then 
\[\sum\limits_{1\leq i\leq w(v)}\gamma_{e,i}\beta_{f,i}=e^*f^{(1)}=0=\delta_{ef}\alpha_{r(e)}.\]
\medskip
\item[Case 2.2] Assume that $r(e)\in r(E^1_w)$. Then 
\[\sum\limits_{1\leq i\leq w(v)}\gamma_{e,i}\beta_{f,i}=\sum\limits_{j=1}^{w(g^{r(e)})}(e^{(j)})^*f^{(1)}=0=\delta_{ef}\alpha_{r(e)}.\]
\end{enumerate}
\medskip
\item[Case 3] Assume that $e\in E^1_{w}$ and $f\in E^1_{uw}$. This case is similar to Case 2 and therefore is ommitted.
\medskip
\item[Case 4] Assume that $e,f\in E^1_w$. Since no vertex emits two distinct weighted edges in $(E,w)$, it follows that $e=f$ and $w(v)=w(e)$. Clearly
\[\sum\limits_{1\leq i\leq w(v)}\gamma_{e,i}\beta_{f,i}=(e^{(1)})^*e^{(1)}+\sum\limits_{j=2}^{w(e)}e^{(j)}(e^{(j)})^*=r(e)^{(1)}+\sum\limits_{j=2}^{w(e)} r(e)^{(j)}=\delta_{ef}\alpha_{r(e)}\]
(note that $e^{(j)}$ is the only edge emitted by $r(e)^{(j)}$ in $\tilde E$).
\end{enumerate}\medskip
Thus (iii) holds. \\
Next we check (iv). Let $v\in E^0$ and $1\leq i,j\leq w(v)$. Note that the existence of $i,j$ with the property $1\leq i,j\leq w(v)$ implies that $w(v)\geq 1$, i.e. that $v$ is not a sink in $(E,w)$. It follows that $v\in E^0\setminus r(E^1_w)$. We have to show that $\sum\limits_{e\in s^{-1}(v)}\beta_{e,i}\gamma_{e,j}= \delta_{ij}\alpha_{v}$.

\begin{enumerate}[C{a}se (a)]
\item Assume that $i=j=1$. Clearly
\begin{align*}
&\sum\limits_{e\in s^{-1}(v)}\beta_{e,1}\gamma_{e,1}\\
=&\sum\limits_{\substack{e\in s^{-1}(v)\cap E^1_{uw},\\r(e)\not\in r(E^1_w)}}\beta_{e,1}\gamma_{e,1}+\sum\limits_{\substack{e\in s^{-1}(v)\cap E^1_{uw},\\r(e)\in r(E^1_w)}}\beta_{e,1}\gamma_{e,1}+\sum\limits_{e\in s^{-1}(v)\cap E^1_{w}}\beta_{e,1}\gamma_{e,1}\\
=&\sum\limits_{\substack{e\in s^{-1}(v)\cap E^1_{uw},\\r(e)\not\in r(E^1_w)}}ee^*+\sum\limits_{\substack{e\in s^{-1}(v)\cap E^1_{uw},\\r(e)\in r(E^1_w)}}\sum\limits_{j=1}^{w(g^{r(e)})}e^{(j)}\sum\limits_{k=1}^{w(g^{r(e)})}(e^{(k)})^*+\sum\limits_{e\in s^{-1}(v)\cap E^1_{w}}e^{(1)}(e^{(1)})^*\\
=&\underbrace{\sum\limits_{\substack{e\in s^{-1}(v)\cap E^1_{uw},\\r(e)\not\in r(E^1_w)}}ee^*+\sum\limits_{\substack{e\in s^{-1}(v)\cap E^1_{uw},\\r(e)\in r(E^1_w)}}\sum\limits_{j,k=1}^{w(g^{r(e)})}e^{(j)}(e^{(k)})^*+\sum\limits_{e\in s^{-1}(v)\cap E^1_{w}}e^{(1)}(e^{(1)})^*}_{T_1:=}.
\end{align*}
Since $\tilde r(e^{(j)})=r(e)^{(j)}$ for any $e\in E^1_{uw},r(e)\in r(E^1_w)$, we have $e^{(j)}(e^{(k)})^*=0$ in $L_K(\tilde E)$ whenever $j\neq k$. Hence
\[T_1=\underbrace{\sum\limits_{\substack{e\in s^{-1}(v)\cap E^1_{uw},\\r(e)\not\in r(E^1_w)}}ee^*+\sum\limits_{\substack{e\in s^{-1}(v)\cap E^1_{uw},\\r(e)\in r(E^1_w)}}\sum\limits_{j=1}^{w(g^{r(e)})}e^{(j)}(e^{(j)})^*+\sum\limits_{e\in s^{-1}(v)\cap E^1_{w}}e^{(1)}(e^{(1)})^*}_{T_2:=}.\]
One checks easily that 
\begin{align*}
\tilde s^{-1}(v)=&\{e\mid e\in s^{-1}(v)\cap E^1_{uw},r(e)\not\in r(E^1_w)\}\\
&\sqcup\{e^{(j)}\mid e\in s^{-1}(v)\cap E^1_{uw}, r(e)\in r(E^1_w),1\leq j\leq w(g^{r(e)})\}\\
&\sqcup\{e^{(1)}\mid e\in s^{-1}(v)\cap E^1_{w}\}.
\end{align*}
Hence $T_2=v=\delta_{11}\alpha_{v}$.
\medskip
\item Assume that $i=1$ and $j>1$. Then $w(v)\geq j>1$ and hence $v$ emits precisely one weighted edge $f$. Since $\gamma_{e,j}=0$ whenever $j\geq w(e)$, we have
\[\sum\limits_{e\in s^{-1}(v)}\beta_{e,1}\gamma_{e,j}=\beta_{f,1}\gamma_{f,j}=f^{(1)}f^{(j)}=0=\delta_{1j}\alpha_{v}\]
(note that $\tilde r(f^{(1)})=r(f)^{(1)}\neq r(f)^{(j)}=\tilde s(f^{(j)})$).
\medskip
\item Assume that $i>1$ and $j=1$. Then $v$ emits precisely one weighted edge $f$. Clearly
\[\sum\limits_{e\in s^{-1}(v)}\beta_{e,i}\gamma_{e,1}=\beta_{f,i}\gamma_{f,1}=(f^{(i)})^*(f^{(1)})^*=0=\delta_{i1}\alpha_{v}\]
(note that $\tilde s(f^{(i)})=r(f)^{(i)}\neq r(f)^{(1)}=\tilde r(f^{(1)})$).
\medskip
\item Assume that $i,j>1$. Then $v$ emits precisely one weighted edge $f$. Clearly
\[\sum\limits_{e\in s^{-1}(v)}\beta_{e,i}\gamma_{e,j}=\beta_{f,i}\gamma_{f,j}=(f^{(i)})^*f^{(j)}=\delta_{ij}\tilde r(f^{(i)})=\delta_{ij}v=\delta_{ij}\alpha_{v}.\]
\end{enumerate}\medskip
Thus (iv) holds too and hence $X$ is an $(E,w)$-family in $L_K(\tilde E)$. By the Universal Property of $L_K(E,w)$ there is a unique $K$-algebra homomorphism $\phi: L_K(E,w)\rightarrow L_K(\tilde E)$ such that $\phi(v)=\alpha_v$, $\phi(e_{i})=\beta_{e,i}$ and $\phi(e^*_{i})=\gamma_{e,i}$ for all $v\in E^0$, $e\in E^1$ and $1\leq i\leq w(e)$.\\
\\
{\bf Part II}
Set
\begin{align*}
\tilde \alpha_{\tilde v}&:=\begin{cases}v,&\text{ if } \tilde v=v\in M,\\(g^v_{i})^*g^v_i,&\text{ if }\tilde v=v^{(i)}\in N,\end{cases}\\
\tilde\beta_{\tilde e}&:=\begin{cases}e_1,&\text{ if } \tilde e=e\in A,\\e_1(g^{r(e)}_{i})^*g^{r(e)}_i,&\text{ if }\tilde e=e^{(i)}\in B,\\
e_1,&\text{ if }\tilde e=e^{(1)}\in C,\\
e_i^*,&\text{ if }\tilde e=e^{(i)}\in D,
\end{cases}\\
\tilde\gamma_{\tilde e}&:=\begin{cases}e_1^*,&\text{ if } \tilde e=e\in A,\\
(g^{r(e)}_{i})^*g^{r(e)}_ie_1^*,&\text{ if }\tilde e=e^{(i)}\in B,\\
e_1^*,&\text{ if }\tilde e=e^{(1)}\in C,\\
e_i,&\text{ if }\tilde e=e^{(i)}\in D.
\end{cases}
\end{align*}
In order to show that $\tilde X:=\{\tilde\alpha_{\tilde v},\tilde\beta_{\tilde e},\tilde\gamma_{\tilde e}\mid \tilde v\in \tilde E^0,\tilde e\in \tilde E^1\}$ is an $\tilde E$-family in $L_K(E,w)$, one has to show that the relations (i)-(iv) in Remark \ref{remunproplpa} are satisfied. We leave (i) and (ii) to the reader and show (iii) and (iv).\\ 
First we check (iii). Let $\tilde v\in \tilde E^0$ and $\tilde e, \tilde f\in \tilde s^{-1}(\tilde v)$. We have to show that $\tilde\gamma_{\tilde e}\tilde\beta_{\tilde f}=\delta_{\tilde e \tilde f}\tilde\alpha_{\tilde r(\tilde e)}$. 

\begin{enumerate}
\item[Case 1] Assume that $\tilde v\in M$. Then $\tilde e,\tilde f\in A\cup B\cup C$ since $\tilde s^{-1}(D)\subseteq N$.
\medskip
\begin{enumerate}
\item[Case 1.1] Assume that $\tilde e,\tilde f\in A$. Then there are $e,f\in E^1_{uw}, r(e),r(f)\not\in r(E^1_w)$ such that $\tilde e=e$ and $\tilde f=f$. Clearly
$\tilde\gamma_{\tilde e}\tilde\beta_{\tilde f}=e_1^*f_1=\delta_{ef}r(e)=\delta_{\tilde e \tilde f}\tilde\alpha_{\tilde r(\tilde e)}$.\medskip
\item[Case 1.2] Assume that $\tilde e\in A$ and $\tilde f\in B$. Then there is an $e\in E^1_{uw}, r(e)\not\in r(E^1_w)$ such that $\tilde e=e$. Moreover, there is an $f\in E^1_{uw}, r(f)\in r(E^1_w)$ and an $1\leq i \leq w(g^{r(f)})$ such that $\tilde f=f^{(i)}$. Clearly $e\neq f$ and $\tilde e\neq\tilde f$. Hence
$\tilde\gamma_{\tilde e}\tilde\beta_{\tilde f}=e_1^*f_1(g_i^{r(f)})^*g_i^{r(f)}=\delta_{ef}(g_i^{r(f)})^*g_i^{r(f)}=0=\delta_{\tilde e \tilde f}\tilde\alpha_{\tilde r(\tilde e)}$.\medskip
\item[Case 1.3] Assume that $\tilde e\in A$ and $\tilde f\in C$. Then there is an $e\in E^1_{uw}, r(e)\not\in r(E^1_w)$ such that $\tilde e=e$. Moreover, there is an $f\in E^1_{w}$ such that $\tilde f=f^{(1)}$. Clearly $e\neq f$ and $\tilde e\neq\tilde f$. Hence $\tilde\gamma_{\tilde e}\tilde\beta_{\tilde f}=e_1^*f_1=\delta_{ef}r(e)=0=\delta_{\tilde e \tilde f}\tilde\alpha_{\tilde r(\tilde e)}$.\medskip
\item[Case 1.4] Assume that $\tilde e\in B$ and $\tilde f\in A$. Then there is an $e\in E^1_{uw}, r(e)\in r(E^1_w)$ and an $1\leq i \leq w(g^{r(e)})$ such that $\tilde e=e^{(i)}$. Moreover, there there is an $f\in E^1_{uw}, r(f)\not\in r(E^1_w)$ such that $\tilde f=f$. Clearly $e\neq f$ and $\tilde e\neq \tilde f$. Hence
$\tilde\gamma_{\tilde e}\tilde\beta_{\tilde f}=(g^{r(e)}_{i})^*g^{r(e)}_ie_1^*f_1=\delta_{ef}(g^{r(e)}_{i})^*g^{r(e)}_i=0=\delta_{\tilde e \tilde f}\tilde\alpha_{\tilde r(\tilde e)}$.\medskip
\item[Case 1.5] Assume that $\tilde e,\tilde f\in B$. Then there are $e,f\in E^1_{uw}, r(e),r(f)\in r(E^1_w)$ and $1\leq i\leq w(g^{r(e)}), 1\leq j\leq w(g^{r(f)})$ such that $\tilde e=e^{(i)}$ and $\tilde f=f^{(j)}$. Clearly $\tilde\gamma_{\tilde e}\tilde\beta_{\tilde f}=(g^{r(e)}_{i})^*g^{r(e)}_ie_1^*f_1(g_j^{r(f)})^*g_j^{r(f)}=\delta_{ef}(g^{r(e)}_{i})^*g^{r(e)}_i(g_j^{r(f)})^*g_j^{r(f)}=\delta_{ef}\delta_{ij}(g^{r(e)}_{i})^*g^{r(e)}_i=\delta_{\tilde e \tilde f}\tilde\alpha_{\tilde r(\tilde e)}$.
\medskip
\item[Case 1.6] Assume that $\tilde e\in B$ and $\tilde f\in C$. Then there is an $e\in E^1_{uw}, r(e)\in r(E^1_w)$ and a $1\leq i\leq w(g^{r(e)})$ such that $\tilde e=e^{(i)}$. Moreover, there is an $f\in E^1_{w}$ such that $\tilde f=f^{(1)}$. Clearly $e\neq f$ and $\tilde e\neq\tilde f$. Hence $\tilde\gamma_{\tilde e}\tilde\beta_{\tilde f}=(g^{r(e)}_{i})^*g^{r(e)}_ie_1^*f_1=\delta_{ef}(g^{r(e)}_{i})^*g^{r(e)}_i=0=\delta_{\tilde e \tilde f}\tilde\alpha_{\tilde r(\tilde e)}$.\medskip
\item[Case 1.7] Assume that $\tilde e\in C$ and $\tilde f\in A$. Then there is an $e\in E^1_w$ such that $\tilde e=e^{(1)}$. Moreover, there is an $f\in E^1_{uw}, r(f)\not\in r(E^1_w)$ such that $\tilde f=f$. Clearly $e\neq f$ and $\tilde e\neq\tilde f$. Hence
$\tilde\gamma_{\tilde e}\tilde\beta_{\tilde f}=e_1^*f_1=\delta_{ef}r(e)=0=\delta_{\tilde e \tilde f}\tilde\alpha_{\tilde r(\tilde e)}$.\medskip
\item[Case 1.8] Assume that $\tilde e\in C$ and $\tilde f\in B$. Then there is an $e\in E^1_w$ such that $\tilde e =e^{(1)}$. Moreover, there is an $f\in E^1_{uw}, r(f)\in r(E^1_w)$ and an $1\leq i \leq w(g^{r(f)})$ such that $\tilde f=f^{(i)}$. Clearly $e\neq f$ and $\tilde e\neq\tilde f$. Hence
$\tilde\gamma_{\tilde e}\tilde\beta_{\tilde f}=e_1^*f_1(g_i^{r(f)})^*g_i^{r(f)}=\delta_{ef}(g_i^{r(f)})^*g_i^{r(f)}=0=\delta_{\tilde e \tilde f}\tilde\alpha_{\tilde r(\tilde e)}$.\medskip
\item[Case 1.9] Assume that $\tilde e,\tilde f\in C$. Then there are $e,f\in E^1_{w}$ such that $\tilde e=e^{(1)}$ and $\tilde f=f^{(1)}$. Since $s(e)=\tilde s(e^{(1)})=\tilde s(\tilde e)=\tilde s(\tilde f)=\tilde s(f^{(1)})=s(f)$, we have $e=f$ (because no vertex in $(E,w)$ emits two distinct weighted edges). It follows that $\tilde e=\tilde f$. Clearly $\tilde\gamma_{\tilde e}\tilde\beta_{\tilde f}=e_1^*e_1=\delta_{\tilde e \tilde f}\tilde\alpha_{\tilde r(\tilde e)}$.
\end{enumerate}
\medskip
\item[Case 2] Assume that $\tilde v\in N$. Then $\tilde v=v^{(i)}$ for some $v\in r(E^1_w)$ and $1\leq i\leq w(g^v)$. One checks easily that $\tilde s^{-1}(\tilde v)=\emptyset$ if $i=1$ and $\tilde s^{-1}(\tilde v)=\{(g^v)^{(i)}\}$ if $i>1$. It follows that $i>1$ and $\tilde e=\tilde f=(g^v)^{(i)}$. Hence $\tilde\gamma_{\tilde e}\tilde\beta_{\tilde f}=g^v_i(g^v_i)^*=s(g^v)=\delta_{\tilde e \tilde f}\tilde\alpha_{\tilde r(\tilde e)}$.
\end{enumerate}
Thus (iii) holds. \\
Next we check (iv). Let $\tilde v\in \tilde E^0$ such that $\tilde s^{-1}(v)\neq\emptyset$. We have to show that $\sum\limits_{\tilde e\in \tilde s^{-1}(\tilde v)}\tilde\beta_{\tilde e}\tilde\gamma_{\tilde e}= \tilde\alpha_{\tilde v}$.
\begin{enumerate}[C{a}se (a)]
\item Assume that $\tilde v\in M$. Then $\tilde v=v$ for some $v\in E^0\setminus r(E^1_w)$. One checks easily that 
\begin{align*}
\tilde s^{-1}(\tilde v)=&\{e\mid e\in s^{-1}(v)\cap E^1_{uw},r(e)\not\in r(E^1_w)\}\\
&\sqcup\{e^{(i)}\mid e\in s^{-1}(v)\cap E^1_{uw}, r(e)\in r(E^1_w),1\leq i\leq w(g^{r(e)})\}\\
&\sqcup\{e^{(1)}\mid e\in s^{-1}(v)\cap E^1_{w}\}.
\end{align*}
Hence
\begin{align*}
&\sum\limits_{\tilde e\in \tilde s^{-1}(\tilde v)}\tilde \beta_{\tilde e}\tilde \gamma_{\tilde e}\\
=&\sum\limits_{\substack{e\in s^{-1}(v)\cap E^1_{uw},\\r(e)\not\in r(E^1_w)}}\tilde\beta_{e}\tilde\gamma_{e}+\sum\limits_{\substack{e\in s^{-1}(v)\cap E^1_{uw},\\r(e)\in r(E^1_w)}}\sum\limits_{i=1}^{w(g^{r(e)})}\tilde\beta_{e^{(i)}}\tilde\gamma_{e^{(i)}}+\sum\limits_{e\in s^{-1}(v)\cap E^1_{w}}\tilde\beta_{e^{(1)}}\tilde\gamma_{e^{(1)}}\\
=&\sum\limits_{\substack{e\in s^{-1}(v)\cap E^1_{uw},\\r(e)\not\in r(E^1_w)}}e_1e_1^*+\sum\limits_{\substack{e\in s^{-1}(v)\cap E^1_{uw},\\r(e)\in r(E^1_w)}}\sum\limits_{i=1}^{w(g^{r(e)})}e_1(g^{r(e)}_{i})^*g^{r(e)}_i(g^{r(e)}_{i})^*g^{r(e)}_ie_1^*+\sum\limits_{e\in s^{-1}(v)\cap E^1_{w}}e_1e_1^*\\
=&\sum\limits_{\substack{e\in s^{-1}(v)\cap E^1_{uw},\\r(e)\not\in r(E^1_w)}}e_1e_1^*+\sum\limits_{\substack{e\in s^{-1}(v)\cap E^1_{uw},\\r(e)\in r(E^1_w)}}\sum\limits_{i=1}^{w(g^{r(e)})}e_1(g^{r(e)}_{i})^*g^{r(e)}_ie_1^*+\sum\limits_{e\in s^{-1}(v)\cap E^1_{w}}e_1e_1^*\\
=&\sum\limits_{\substack{e\in s^{-1}(v)\cap E^1_{uw},\\r(e)\not\in r(E^1_w)}}e_1e_1^*+\sum\limits_{\substack{e\in s^{-1}(v)\cap E^1_{uw},\\r(e)\in r(E^1_w)}}e_1\big(\sum\limits_{i=1}^{w(g^{r(e)})}(g^{r(e)}_{i})^*g^{r(e)}_i\big)e_1^*+\sum\limits_{e\in s^{-1}(v)\cap E^1_{w}}e_1e_1^*\\
=&\sum\limits_{\substack{e\in s^{-1}(v)\cap E^1_{uw},\\r(e)\not\in r(E^1_w)}}e_1e_1^*+\sum\limits_{\substack{e\in s^{-1}(v)\cap E^1_{uw},\\r(e)\in r(E^1_w)}}e_1e_1^*+\sum\limits_{e\in s^{-1}(v)\cap E^1_{w}}e_1e_1^*\\
=&v=\tilde\alpha_{\tilde v}.
\end{align*}
\item Assume that $\tilde v\in N$. Then $\tilde v=v^{(i)}$ for some $v\in r(E^1_w)$ and $1\leq i\leq w(g^v)$. As mentioned above we have $\tilde s^{-1}(\tilde v)=\emptyset$ if $i=1$ and $\tilde s^{-1}(\tilde v)=\{(g^v)^{(i)}\}$ if $i>1$. Since by assumption $\tilde s^{-1}(\tilde v)\neq\emptyset$, it follows that $i>1$ and $\sum\limits_{\tilde e\in \tilde s^{-1}(\tilde v)}\tilde \beta_{\tilde e}\tilde \gamma_{\tilde e}=\tilde\beta_{(g^v)^{(i)}}\tilde\gamma_{(g^v)^{(i)}}=(g^v_i)^*g^v_i= \tilde\alpha_{\tilde v}$.
\end{enumerate}
Thus (iv) holds too and hence $\tilde X$ is an $\tilde E$-family in $L_K(E,w)$. By the Universal Property of $L_K(\tilde E)$ there is a unique $K$-algebra homomorphism $\tilde \phi: L_K(\tilde E)\rightarrow L_K(E,w)$ such that $\tilde\phi(\tilde v)=\tilde\alpha_{\tilde v}$, $\tilde\phi(\tilde e)=\tilde\beta_{\tilde e}$ and $\tilde\phi(\tilde e^*)=\tilde\gamma_{\tilde e}$ for all $\tilde v\in \tilde E^0$ and $\tilde e\in \tilde E^1$.
\\
\\
{\bf Part III} First we show that $\tilde \phi\circ\phi=\id_{L_K(E,w)}$. Clearly it suffices to show that $\tilde \phi\circ\phi$ fixes all elements of $\{v,e_i,e_i^*\mid v\in E^0, e\in E^1, 1\leq i\leq w(e)\}$ since these elements generate $L_K(E,w)$ as a $K$-algebra. One checks easily that $\tilde \phi\circ\phi$ fixes all elements $v,e_i,e_i^*$ where $v\in E^0$ and $e\in E_w^1$ or $e\in E_{uw}^1, r(e)\not\in r(E^1_w)$. Let now $e\in E_{uw}^1, r(e)\in r(E^1_w)$. Then 
\[\tilde \phi(\phi(e_1))=\tilde \phi(\sum\limits_{j=1}^{w(g^{r(e)})}e^{(j)})=\sum\limits_{j=1}^{w(g^{r(e)})}e_1(g_j^{r(e)})^*g_j^{r(e)}=e_1\sum\limits_{j=1}^{w(g^{r(e)})}(g_j^{r(e)})^*g_j^{r(e)}=e_1r(e)=e_1.\]
Similarly one can show that $\phi(\phi(e_1^*))=e_1^*$ in this case. Hence $\tilde \phi\circ\phi=\id_{L_K(E,w)}$.\\
Now we show that $\phi\circ\tilde\phi=\id_{L_K(\tilde E)}$. Clearly it suffices to show that $\phi\circ\tilde\phi$ fixes all elements of $\{\tilde v,\tilde e,\tilde e^*\mid \tilde v\in \tilde E^0, \tilde e\in \tilde E^1\}$ since these elements generate $L_K(\tilde E)$ as a $K$-algebra. One checks easily that $\phi\circ\tilde\phi$ fixes all elements $\tilde v,\tilde e,\tilde e^*$ where $\tilde v \in \tilde E^0$ and $\tilde e\in \tilde E^1\setminus B$. Let now $\tilde e\in B$. Then $\tilde e=e^{(i)}$ for some $e\in E^1_{uw}, r(e)\in r(E^1_w)$ and $1\leq i \leq w(g^{r(e)})$. Clearly
\[\phi(\tilde\phi(\tilde e))=\tilde \phi(e_1(g_i^{r(e)})^*g_i^{r(e)})=\begin{cases}\sum\limits_{j=1}^{w(g^{r(e)})}e^{(j)}((g^{r(e)})^{(1)})^*(g^{r(e)})^{(1)},&\text{ if }i=1,\\\sum\limits_{j=1}^{w(g^{r(e)})}e^{(j)}(g^{r(e)})^{(i)}((g^{r(e)})^{(i)})^*,&\text{ if }i>1.\end{cases}.\]
But $((g^{r(e)})^{(1)})^*(g^{r(e)})^{(1)}=\tilde r((g^{r(e)})^{(1)})=r(g^{r(e)})^{(1)}=r(e)^{(1)}$ in $L_K(\tilde E)$. Since $\tilde r(e^{(j)})=r(e)^{(j)}$, it follows that $\sum\limits_{j=1}^{w(g^{r(e)})}e^{(j)}((g^{r(e)})^{(1)})^*(g^{r(e)})^{(1)}=e^{(1)}=\tilde e$ if $i=1$. Now assume that $i>1$. One checks easily that $\tilde s^{-1}(r(e)^{(i)})=\{(g^{r(e)})^{(i)}\}$. Hence $(g^{r(e)})^{(i)}((g^{r(e)})^{(i)})^*=r(e)^{(i)}$ in $L_K(\tilde E)$. Since $\tilde r(e^{(j)})=r(e)^{(j)}$, it follows that $\sum\limits_{j=1}^{w(g^{r(e)})}e^{(j)}(g^{r(e)})^{(i)}((g^{r(e)})^{(i)})^*=e^{(i)}=\tilde e$. Hence we have shown that $\phi(\tilde\phi(\tilde e))=\tilde e$ if $\tilde e\in B$. Similarly one can show that $\phi(\tilde \phi(\tilde e^*))=\tilde e^*$ in this case. Hence $\phi\circ\tilde \phi=\id_{L_K(\tilde E)}$ and thus $L_K(E,w)\cong L_K(\tilde E)$.
\end{proof}

\begin{example}\label{exlpa2}
Consider the weighted graph
\[
(E,w):\quad\xymatrix@C+15pt{
t\ar@/_1.7pc/[r]_{b^{(1)},1}&u\ar@/_1.7pc/[l]_{a^{(1)},1}\ar@/_1.0pc/[l]^{a^{(2)},1}& v\ar[l]_{c,1}\ar@(ul,ur)^{d,1}\ar@/^1.9pc/[rr]^{e,1}\ar[r]^{f,2}\ar@/_1.7pc/[r]_{g,1}&x&y\ar[l]_{h^{(1)},1}&z\ar@/_1.7pc/[l]_{k^{(1)},1}\ar@/^1.7pc/[l]^{k^{(2)},1}
}.
\]
Let $\tilde E$ be defined as in the proof of Lemma \ref{lemkey2}. Then $\tilde E$ is the graph
\[
\tilde E:\quad\xymatrix@C+15pt{
t\ar@/_1.7pc/[r]_{b^{(1)}}&u\ar@/_1.7pc/[l]_{a^{(1)}}\ar@/_1pc/[l]^{a^{(2)}}& v\ar[l]_{c}\ar@(ul,ur)^{d}\ar@/^2.8pc/[rr]^{e}\ar[r]^{f^{(1)}}\ar@/_0.7pc/[r]_{g^{(1)}}\ar@/_2.5pc/[dr]_{g^{(2)}}&x^{(1)}&y\ar[l]_{(h^{(1)})^{(1)}}\ar[dl]^{(h^{(1)})^{(2)}}&z\ar@/_1.7pc/[l]_{k^{(1)}}\ar@/^1.7pc/[l]^{k^{(2)}}\\
&&&x^{(2)}\ar@/^0.7pc/[ul]^{f^{(2)}}&&
}.
\]
The proof of Lemma \ref{lemkey2} shows that $L_K(E,w)\cong L_K(\tilde E)$.
\end{example}

Lemma \ref{lemkey1} and \ref{lemkey2} directly imply the theorem below.
\begin{theorem}\label{thmm}
Let $(E,w)$ be a weighted graph that satisfies Condition (LPA). Then the weighted Leavitt path algebra $L_K(E,w)$ is isomorphic to an unweighted Leavitt path algebra.
\end{theorem}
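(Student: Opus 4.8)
The plan is that this theorem is an immediate consequence of the two Key Lemmas already established, which were designed precisely so as to be composed; the argument carries no essential content beyond checking that the output of the first lemma matches the input demanded by the second.

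First I would apply Lemma \ref{lemkey1} to the given weighted graph $(E,w)$. Since $(E,w)$ satisfies Condition (LPA), that lemma yields a weighted graph, which I will denote $(E',w')$ in order to avoid a clash with the tildes appearing inside the two lemma statements, such that three things hold simultaneously: the ranges of all weighted edges in $(E',w')$ are sinks; no vertex of $(E',w')$ emits or receives two distinct weighted edges; and $L_K(E',w')\cong L_K(E,w)$.

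Second, I would observe that the first two of these properties are \emph{exactly} the standing hypotheses of Lemma \ref{lemkey2}. Hence I may apply that lemma to $(E',w')$, obtaining an ordinary (unweighted) graph $\tilde E$ together with an isomorphism $L_K(E',w')\cong L_K(\tilde E)$. Composing the two isomorphisms gives $L_K(E,w)\cong L_K(E',w')\cong L_K(\tilde E)$, and since $L_K(\tilde E)$ is by definition an unweighted Leavitt path algebra, the theorem follows.

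As for where the difficulty lies: essentially all of it has already been absorbed into the two Key Lemmas. Lemma \ref{lemkey1} is the harder of the two, as it must reverse the orientation of the edges emitted from the saturated set $Z=T(r(E^1_w))$ and then verify via the generating-ideal computation that this reversal produces the same presentation; Lemma \ref{lemkey2} then handles the remaining, now well-separated, weighted edges by splitting each into one forward and several backward unweighted edges and exhibiting mutually inverse homomorphisms through the respective universal properties. For the present theorem itself, the only point to confirm is that the weighted graph produced by Lemma \ref{lemkey1} satisfies the hypotheses of Lemma \ref{lemkey2}, and this is read off directly from the two statements. Consequently I do not expect any genuine obstacle at this final step; it is a clean composition.
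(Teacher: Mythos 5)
Your proof is correct and is exactly the paper's argument: the paper states that Lemma \ref{lemkey1} and Lemma \ref{lemkey2} directly imply the theorem, which is precisely your composition of the two Key Lemmas. Nothing further is needed.
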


\begin{example}\label{exlpa3}
Consider the weighted graph
\[
(E,w):\quad\xymatrix@C+15pt{
t\ar@/^1.7pc/[r]^{a,2}&u\ar@/^1.7pc/[l]^{b,1}& v\ar[l]_{c,1}\ar@(ul,ur)^{d,1}\ar@/^1.9pc/[rr]^{e,1}\ar[r]^{f,2}\ar@/_1.7pc/[r]_{g,1}&x\ar[r]^{h,1}&y\ar[r]^{k,2}&z},
\]
which satisfies Condition (LPA), and the graph
\[
\tilde E:\quad\xymatrix@C+15pt{
t\ar@/_1.7pc/[r]_{b^{(1)}}&u\ar@/_1.7pc/[l]_{a^{(1)}}\ar@/_1pc/[l]^{a^{(2)}}& v\ar[l]_{c}\ar@(ul,ur)^{d}\ar@/^2.8pc/[rr]^{e}\ar[r]^{f^{(1)}}\ar@/_0.7pc/[r]_{g^{(1)}}\ar@/_2.5pc/[dr]_{g^{(2)}}&x^{(1)}&y\ar[l]_{(h^{(1)})^{(1)}}\ar[dl]^{(h^{(1)})^{(2)}}&z\ar@/_1.7pc/[l]_{k^{(1)}}\ar@/^1.7pc/[l]^{k^{(2)}}\\
&&&x^{(2)}\ar@/^0.7pc/[ul]^{f^{(2)}}&&
}.
\]
By Examples $\ref{exlpa1}$ and \ref{exlpa2} we have $L_K(E,w)\cong L_K(\tilde E)$.
\end{example}

\section{Abscence of Condition (LPA)}
Throughout this subsection $(E,w)$ denotes a weighted graph. We start by recalling the basis result of \cite{hazrat-preusser}. Set $X:=\{v,e_i,e_i^*\mid v\in E^0,e\in E^1,1\leq i\leq w(e)\}$, let $\X$ the set of all nonempty words over $X$ and set $\overline{\X}:=\X\cup\{\text{empty word}\}$. Together with juxtaposition of words $\X$ becomes a semigroup and $\overline{\X}$ a monoid. If $A,B\in \overline{\X}$, then $B$ is called a {\it subword of $A$} if there are $C,D \in\overline{\X}$ such that $A=CBD$ and a {\it suffix of $A$} if there is a $C \in \overline{\X}$ such that $A=CB$. 

\begin{definition}
Let $p=x_1\dots x_n\in\X$. Then $p$ is called {\it a d-path} if either $x_1,\dots,x_n\in X\setminus E^0$ and $r(x_i)=s(x_{i+1})~(1\leq i \leq n-1)$ or $x_1\in E^0$ and $n=1$. Here we use the convention $s(v):=v$, $r(v):=v$, $s(e_i):=s(e)$, $r(e_i):=r(e)$, $s(e^*_i):=r(e)$ and $r(e^*_i):=s(e)$ for any $v\in E^0$, $e \in E^1$ and $1\leq i \leq w(e)$.
\end{definition} 

\begin{remark}
Let $\hat E$ be the directed graph associated to $(E,w)$ and $\hat E_d$ the double graph of $\hat E$ (see \cite[Definitions 2 and 8]{preusser}). The d-paths are precisely the paths in the double graph $\hat E_d$.
\end{remark}

Fix for any $v\in E^0$ such that $s^{-1}(v)\neq\emptyset$ an edge $e^v\in s^{-1}(v)$ such that $w(e^v)=w(v)$. The $e^v$'s are called {\it special edges}.

\begin{definition}
The words $e^v_i(e^v_j)^*~(v\in E^0,1\leq i,j\leq w(v))$ and $e^*_1f_1~(e,f\in E^1)$
in $\X$ are called {\it forbidden}. A {\it normal d-path} or {\it nod-path} is a d-path $p$ such that none of its subwords is forbidden.
\end{definition}

Let $K\X$ the free $K$-algebra generated by $X$ (i.e. the $K$-vector space with basis $\X$ which becomes a $K$-algebra by linearly extending the juxtaposition of words). Then $L_K(E,w)$ is the quotient of $K\X$ by the ideal generated by the relations (i)-(iv) in Definition \ref{def3}. Let $K\X_{\nod}$ be the linear subspace of $K\X$ spanned by the nod-paths.

\begin{theorem}[Hazrat, Preusser, 2017] \label{thmbasis}
The canonical map $K\X_{\nod}\rightarrow L_K(E,w)$ is an isomorphism of $K$-vector spaces. In particular the images of the nod-paths under this map form a linear basis for $L_K(E,w)$.
\end{theorem}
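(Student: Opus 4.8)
The plan is to prove the statement by exhibiting a terminating, confluent rewriting system whose irreducible monomials are exactly the nod-paths, and then to invoke Bergman's Diamond Lemma. First I would dispose of the relations (i) and (ii): the quotient $K\X/\langle(\mathrm{i}),(\mathrm{ii})\rangle$ is the path algebra $K\hat E_d$ of the double graph, whose $K$-basis is the set of d-paths. Indeed, (i) and (ii) are precisely the orthogonal-idempotent and source/range-absorption relations of the quiver with vertex set $E^0$ and arrows the letters $e_i, e_i^*$ equipped with the d-path conventions for $s$ and $r$, so this is the standard normal-form statement for path algebras (and it is exactly the identification, recalled in the Remark, of d-paths with the paths of $\hat E_d$). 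Thus $L_K(E,w)\cong K\hat E_d/J$, where $J$ is generated by the images of (iii) and (iv), and the canonical map factors through this isomorphism. It therefore suffices to show that the nod-paths descend to a basis of $K\hat E_d/J$.

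Next I would turn (iii) and (iv) into reduction rules. Solving (iv) for the special-edge term and (iii) for the index-$1$ term gives, for each $v$ and the relevant indices,
\[
e^v_i(e^v_j)^*\;\longmapsto\;\delta_{ij}v-\!\!\sum_{e\in s^{-1}(v),\,e\neq e^v}\!\!e_ie_j^*,\qquad
e_1^*f_1\;\longmapsto\;\delta_{ef}r(e)-\!\!\sum_{2\le i\le w(v)}\!\!e_i^*f_i .
\]
The left-hand sides are precisely the two families of forbidden words, so the irreducible d-paths are exactly the nod-paths. To feed this into the Diamond Lemma I would fix an admissible well-order, ordering d-paths first by length and then lexicographically for a well-order of the alphabet chosen so that, for each fixed index, the copy $e^v_i$ of the special edge exceeds every non-special $e_i$ with the same source, and the ghost $e_1^*$ exceeds $e_i^*$ for $i\ge 2$. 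With this order each forbidden word is the leading term of its rule and every term on the right is strictly smaller. Row-finiteness makes all the displayed sums finite, so well-foundedness shows that every d-path reduces in finitely many steps to a $K$-linear combination of nod-paths; this already gives surjectivity of the canonical map.

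The heart of the argument is confluence. As both forbidden words have length $2$ and opposite shape (edge--ghost versus ghost--edge), there are no inclusion ambiguities, and a short case analysis leaves only the two overlap families
\[
e^v_i(e^v_1)^*h_1\qquad\text{and}\qquad g_1^*e^v_1(e^v_j)^*\qquad(g,h\in s^{-1}(v)).
\]
For each I would reduce the two overlapping subwords in both orders and check that the results agree. The computation genuinely uses (i)--(iv): after one rule is applied, the new factors $g_1^*e_1$ (of type (iii)) or $e^v_i(e^v_j)^*$ (of type (iv)) become forbidden or collapse through the source/range relations, and one must track the resulting Kronecker deltas, splitting on whether $g=e^v$ and on whether $j=1$. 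I expect this delta-bookkeeping to be the main obstacle, as it is the only place where the Leavitt relations interact rather than merely simplify. Conveniently, the involution $*$ sends $g_1^*e^v_1(e^v_j)^*$ to $e^v_j(e^v_1)^*g_1$ and interchanges the two rule families, so resolving the second family automatically resolves the first, halving the work.

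Finally, having checked that the order is admissible, that the rules are reducing, and that both ambiguities resolve, the Diamond Lemma yields that the irreducible monomials form a $K$-basis of $K\hat E_d/J\cong L_K(E,w)$. Since these irreducibles are exactly the nod-paths, the canonical map $K\X_{\nod}\to L_K(E,w)$ is a linear isomorphism, as claimed. Alternatively one may run a single reduction system over the free algebra $K\X$ that also incorporates (i) and (ii); the additional ambiguities involving the vertex and source/range relations are routine, and the essential content remains the resolution of the two families above.
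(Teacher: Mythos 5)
The paper does not reprove this statement but simply cites \cite[Theorem 16]{hazrat-preusser}, and the proof given there is essentially the argument you outline: a reduction system on (the path algebra of) the double graph whose leading terms are the two families of forbidden words, ordered by length and then lexicographically with special edges and index-$1$ ghost edges made largest, and resolved via Bergman's Diamond Lemma. Your setup --- the two rewrite rules obtained by solving (iv) for $e^v_i(e^v_j)^*$ and (iii) for $e_1^*f_1$, the admissibility of the order, and the identification of exactly the two overlap ambiguities $e^v_i(e^v_1)^*h_1$ and $g_1^*e^v_1(e^v_j)^*$ (interchanged by the involution) --- matches that proof; the only part left unexecuted is the delta-bookkeeping in resolving these overlaps, which does go through.
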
 
\begin{proof}
See \cite[Theorem 16]{hazrat-preusser} and its proof.
\end{proof}



The following lemma will be used in the proofs of Theorems \ref{thm1},\ref{thm2},\ref{thm3},\ref{thm4},\ref{thm5},\ref{thm6} and \ref{thmm2}.
\begin{keylemma}\label{lemimp}
Suppose that $(E,w)$ does not satisfy Condition (LPA). Then there is a nod-path whose first letter is $e_2$ and whose last letter is $e_2^*$ for some $e\in E^1_w$.
\end{keylemma}
\begin{proof}
\cite[Proof of Lemma 35]{preusser} shows that if one of the Conditions (LPA1), (LPA2) and (LPA3) is not satisfied, then then there is a nod-path whose first letter is $e_2$ and whose last letter is $e_2^*$ for some $e\in E^1_w$. Assume now that $(E,w)$ does not satisfy Condition (LPA4). Then there is an $e\in E^1_w$, a path $p$ and a cycle $c$ such that $s(p)=r(e)$, $r(p)=s(c)$ and $e$ does not belong to $c$. Write $c=f^{(1)}\dots f^{(m)}$ where $f^{(1)},\dots,f^{(m)}\in E^1$. If $p=r(e)$, then $e_2f^{(1)}_1\dots f^{(m)}_1e_2^*$ is a nod-path (since $f^{(m)}\neq e$). Now assume that $p=g^{(1)}\dots g^{(n)}$ where $g^{(1)},\dots,g^{(n)}\in E^1$. Clearly we assume that no letter of $p$ is a letter of $c$. One checks easily that $e_2g^{(1)}_1\dots g^{(n)}_1f^{(1)}_1\dots f^{(m)}_1(g^{(n)}_1)^*\dots(g^{(1)}_1)^*e_2^*$ is a nod-path (note that $f^{(m)}\neq g^{(n)}$).
\end{proof}

\begin{theorem}\label{thm1}
Suppose that $(E,w)$ does not satisfy Condition (LPA). Then $L_K(E,w)$ is neither simple nor graded simple.
\end{theorem}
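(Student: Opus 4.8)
The plan is to produce a single proper nonzero \emph{graded} ideal $I\subseteq L_K(E,w)$. Since every graded ideal is in particular an ideal, such an $I$ shows at once that $L_K(E,w)$ is neither graded simple nor simple, so both halves of the statement follow from one construction. Throughout I would use the $\mathbb{Z}^n$-grading with $\deg v=0$, $\deg e_i=\epsilon_i$ and $\deg e_i^*=-\epsilon_i$, together with the nod-path basis of Theorem~\ref{thmbasis}. The first step is to apply the Key Lemma (Lemma~\ref{lemimp}) to fix a weighted edge $e\in E^1_w$ (so $w(e)\ge2$) and a nod-path $p$ whose first letter is $e_2$ and whose last letter is $e_2^*$. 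This $p$ is the element I will use to certify that $I$ is proper.

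I would take $I:=\langle e_1^*e_1\rangle$, the two-sided ideal generated by $e_1^*e_1$. That $I$ is \emph{graded} is clear, since $e_1^*e_1$ is homogeneous of degree $-\epsilon_1+\epsilon_1=0$. That $I$ is \emph{nonzero} follows from Theorem~\ref{thmbasis}: relation (iii) of Definition~\ref{def3} (with $e=f$) gives $e_1^*e_1=r(e)-\sum_{2\le i\le w(e)}e_i^*e_i$, and since $r(e)$ and the words $e_i^*e_i$ $(i\ge2)$ are pairwise distinct nod-paths, this is a nontrivial linear combination of basis elements, whence $e_1^*e_1\neq0$.

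It remains to show that $I$ is \emph{proper}, and this is where I expect the real work to lie; it suffices to prove $p\notin I$, for if $I=L_K(E,w)$ then certainly $p\in I$. The route I would take is to construct a nonzero $K$-algebra $A$ together with an $(E,w)$-family in $A$ (in the sense of Remark~\ref{remunpropwlpa}) whose induced homomorphism $\psi\colon L_K(E,w)\to A$ kills $e_1^*e_1$ but not $p$; then $I\subseteq\ker\psi$ while $p\notin\ker\psi$, so $p\notin I$. Intuitively $A$ should model the algebra obtained by collapsing the first thread of $e$ while retaining the higher threads, so that relation (iii) becomes $\sum_{2\le i\le w(e)}e_i^*e_i=r(e)$ and the nod-path $p=e_2\cdots e_2^*$, built entirely from the surviving second thread, is carried to a nonzero element; the one-vertex example $(E'',w'')$, where this quotient is the nonzero algebra $K[F_2]$ and $p\mapsto s(e)\neq0$, is the model to generalise. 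The principal obstacle is exactly the construction and verification of this $(E,w)$-family: one must check that setting $e_1^*e_1=0$ is compatible with all the defining relations of Definition~\ref{def3}---especially the relations $\sum_{f\in s^{-1}(s(e))}f_if_j^*=\delta_{ij}s(e)$ at the vertex $s(e)$, which tie the individual threads together---and that no vertex is inadvertently forced to vanish in $A$. Once such an $A$ is in hand, $I$ is a proper nonzero graded ideal and the theorem follows; alternatively one could try to establish $p\notin I$ directly from Theorem~\ref{thmbasis} by analysing the nod-path expansions of the elements $a\,e_1^*e_1\,b$, but this bookkeeping seems less transparent than exhibiting $A$.
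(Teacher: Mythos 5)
Your setup is fine as far as it goes: a single proper nonzero graded ideal does settle both non-simplicity and non-graded-simplicity, the element $e_1^*e_1$ is homogeneous of degree $0$, and your verification via Theorem~\ref{thmbasis} that $e_1^*e_1=r(e)-\sum_{2\le i\le w(e)}e_i^*e_i\neq 0$ is correct. But the proof has a genuine gap at exactly the point you flag yourself: properness of $I=\langle e_1^*e_1\rangle$ is never established. You reduce it to exhibiting an $(E,w)$-family in some nonzero algebra $A$ that kills $e_1^*e_1$ but not $p$, and then stop. That construction is not a routine verification to be deferred: when $s(e)$ emits other edges, relations (iii) and (iv) at $s(e)$ couple $e_1,e_1^*$ to all the other edges and threads at that vertex (e.g.\ $\sum_{f\in s^{-1}(s(e))}f_1f_j^*=\delta_{1j}s(e)$), and since there is no positivity available, $e_1^*e_1=0$ does not obviously propagate to $e_1=0$ nor obviously avoid collapsing $p$. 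Your one-vertex model $(E'',w'')$ works out, but a uniform candidate for $A$ over all graphs failing (LPA) --- with several weighted edges, non-special edges at $s(e)$, etc.\ --- is essentially as hard as the theorem itself. As written, the argument proves only that $I$ is a nonzero graded ideal, which is not enough.

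For comparison, the paper avoids this entirely by generating the ideal by $p$ itself rather than by $e_1^*e_1$. Since $e_2$ is never the second letter of a forbidden word and $e_2^*$ is never the first letter of one, the products $apb$ never "reduce into" $p$, so $\langle p\rangle$ is exactly the linear span of the nod-paths containing $p$ as a subword. Properness is then immediate from Theorem~\ref{thmbasis}: no vertex contains $p$ as a subword, so no vertex lies in the ideal; and $p$ is homogeneous, so the ideal is graded. If you want to salvage your route, the most direct fix is to switch generators in this way; otherwise you must actually construct and verify the family defining $A$, including checking relation (iv) at $s(e)$ and at every vertex on $p$, which is the real content you have left out.
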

\begin{proof}
By Lemma \ref{lemimp}, there is a nod-path $p$ whose first letter is $e_2$ and whose last letter is $e_2^*$ for some $e\in E^1_w$. One checks easily that the ideal $I$ generated by $p$ equals the linear span of all nod-paths that contain $p$ as a subword (note that $e_2$ is not the second letter of a forbidden word and $e_2^*$ not the first letter of a forbidden word). It follows that $I$ is a proper ideal of $L_K(E,w)$ (it is not the zero ideal since it contains the basis element $p$ and it is not equal to $L_K(E,w)$ since it does not contain any vertex). Since $I$ is generated by a homogeneous element, it is a graded ideal. 
\end{proof}

Recall that a group graded $K$-algebra $A=\bigoplus\limits_{g\in G} A_g$ is called {\it locally finite} if $\dim_K A_g < \infty$ for every $g\in G$. 
\begin{theorem}\label{thm2}
Suppose that $(E,w)$ does not satisfy Condition (LPA). Then $L_K(E,w)$ is not locally finite.
\end{theorem}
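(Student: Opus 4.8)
The plan is to produce infinitely many linearly independent nod-paths all lying in a single homogeneous component of $L_K(E,w)$, and then to invoke Theorem \ref{thmbasis}. I work with the $\mathbb Z^n$-grading, where $\deg(e_i)=\epsilon_i$, $\deg(e_i^*)=-\epsilon_i$ and $\deg(v)=0$; it then suffices to exhibit infinitely many linearly independent elements of the degree-$0$ component, which I denote $L_K(E,w)_0$.

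First I would apply the Key Lemma \ref{lemimp} to obtain a nod-path $p$ whose first letter is $e_2$ and whose last letter is $e_2^*$, for some $e\in E^1_w$. Since $s(e_2)=s(e)$ and $r(e_2^*)=s(e)$, the d-path $p$ is a loop based at $s(e)$. Next I would record that the involution $*$ sends nod-paths to nod-paths: writing $p=x_1\dots x_m$, its adjoint $p^*=x_m^*\dots x_1^*$ is again a d-path, and one checks that the $*$-image of each of the two types of forbidden word ($e^v_i(e^v_j)^*$ and $e_1^*f_1$) is again a forbidden word of the same type; hence $p^*$ has no forbidden subword and is a nod-path, again with first letter $e_2$ and last letter $e_2^*$.

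Set $q:=pp^*$. Because $r(p)=s(e)=s(p^*)$, the juxtaposition $q$ is a d-path, and the only new length-$2$ subword created at the junction is $e_2^*e_2$, which is of neither forbidden type (it is not of the form (unstarred)(starred), and its indices are $2$, not $1$); so $q$ is a nod-path. Since $r(q)=s(e)=s(q)$, the powers $q^k$ ($k\ge 1$) are again d-paths, and every junction between consecutive copies of $q$ contributes only the subword $e_2^*e_2$. As forbidden words have length $2$, it follows that each $q^k$ is a nod-path. The $q^k$ are pairwise distinct words (their lengths differ), so by Theorem \ref{thmbasis} their images in $L_K(E,w)$ are linearly independent.

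Finally, $\deg(p^*)=-\deg(p)$ because $\deg(y^*)=-\deg(y)$ for every edge-letter $y$, whence $\deg(q)=0$ and $\deg(q^k)=0$ for all $k$. Thus $\{q^k\mid k\ge 1\}$ is an infinite linearly independent subset of $L_K(E,w)_0$, so $\dim_K L_K(E,w)_0=\infty$ and $L_K(E,w)$ is not locally finite. The step to get right is the bookkeeping guaranteeing that $q^k$ remains a nod-path, i.e.\ that no forbidden word is created at the junctions, together with the degree computation forcing all the $q^k$ into the single component $L_K(E,w)_0$; both hinge on the observations that forbidden words have length two and that $e_2^*e_2$ is not forbidden.
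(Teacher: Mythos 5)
Your proposal is correct and follows essentially the same route as the paper: both apply the Key Lemma to get a nod-path $p$ starting with $e_2$ and ending with $e_2^*$, form $p^*$, and observe that the powers $(pp^*)^k$ are pairwise distinct nod-paths in the degree-$0$ component, so Theorem \ref{thmbasis} gives $\dim_K L_K(E,w)_0=\infty$. The paper leaves the verification that $(pp^*)^k$ is a nod-path to the reader; your junction bookkeeping (forbidden words have length two and $e_2^*e_2$ is not forbidden) supplies exactly the missing details.
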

\begin{proof}
By Lemma \ref{lemimp}, there is a nod-path $p=x_1\dots x_n$ such that $x_1=e_2$ and $x_n=e_2^*$ for some $e\in E^1_w$. Set $p^*:=x_n^*\dots x_1^*$ (where $(f_i^*)^*=f_i$ for any $f\in E^1$ and $1\leq i\leq w(f)$). One checks easily that for any $n\in\N$, $(pp^*)^n$ is a nod-path that lies in the homogeneous $0$-component $L_K(E,w)_0$. It follows from Theorem \ref{thmbasis} that $\dim_K(L_K(E,w)_0)=\infty$.
\end{proof}

\begin{theorem}\label{thm3}
Suppose that $(E,w)$ does not satisfy Condition (LPA). Then $L_K(E,w)$ is not Noetherian.
\end{theorem}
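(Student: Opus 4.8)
The plan is to produce an infinite strictly ascending chain of right ideals of $L_K(E,w)$ and then pass to left ideals via the involution. By Lemma~\ref{lemimp} there is a nod-path $p$ whose first letter is $e_2$ and whose last letter is $e_2^*$ for some $e\in E^1_w$; set $v:=s(e)$ and observe that $p$ is a closed d-path at $v$ of length $m\geq 2$ (since its first and last letters differ). Write $p=e_2\rho$, where $\rho$ is the nod-path obtained from $p$ by deleting its first letter, and put $A:=p=e_2\rho$ and $B:=e_1\rho$ (recall $w(e)\geq 2$, so the letter $e_1$ exists). Both $A$ and $B$ are d-paths based at $v$, they have the same length $m$, they both end in the letter $e_2^*$, and they differ only in their first letter ($e_2$ versus $e_1$); in particular $A\neq B$. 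The chain will be $\mathcal R_N:=\sum_{n=0}^{N}(A^nB)\,L_K(E,w)$.

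First I would check that $B$ is a nod-path. The only subword of $B$ that is not already a subword of the nod-path $\rho$ is the junction $e_1y_2$, where $y_2$ is the second letter of $p$. A forbidden word is either of the form $e^v_i(e^v_j)^*$ (with $e^v$ the special edge at $v$) or of the form $g_1^*h_1$; since $e_1$ is unstarred, the pair $e_1y_2$ can only be forbidden of the first type, which would force $e=e^v$ and $y_2=e_j^*$ for some $j$ — but then $e_2y_2=e^v_2(e^v_j)^*$ would be a forbidden subword of $p$, contradicting that $p$ is a nod-path. Hence $B$, and likewise every word in the letters $A$ and $B$, is a nod-path, since all internal junctions are $e_2^*e_2$ or $e_2^*e_1$, neither of which is forbidden. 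Next I would record the key \emph{clean concatenation} fact: because $A$ and $B$ end in $e_2^*$, whose index is $2$, no forbidden word can occur at a junction $e_2^*\cdot z$ (the first type needs an unstarred first letter, the second type a first letter of index $1$). Therefore, by Theorem~\ref{thmbasis}, for any word $W$ in $A$ and $B$ and any nod-path $b$, the product $Wb$ is either $0$ (when $s(b)\neq v$) or the single nod-path $Wb$; consequently $W\,L_K(E,w)$ is exactly the $K$-span of those nod-paths having $W$ as a prefix.

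Granting this, the chain $\mathcal R_0\subseteq\mathcal R_1\subseteq\cdots$ is strictly ascending. Indeed $\mathcal R_N$ is the span of all nod-paths admitting some $A^nB$ with $0\leq n\leq N$ as a prefix, so the basis element $A^{N+1}B$ lies in $\mathcal R_N$ only if it has $A^nB$ as a prefix for some $n\leq N$. But since $|A|=|B|=m$, the first $(n+1)m$ letters of $A^{N+1}B$ form the word $A^{n+1}$, and $A^{n+1}=A^nB$ would force $A=B$, which is false. Hence $A^{N+1}B\notin\mathcal R_N$ and $\mathcal R_N\subsetneq\mathcal R_{N+1}$, so $L_K(E,w)$ is not right Noetherian; applying the involution $*$, which is an anti-automorphism carrying right ideals bijectively to left ideals and preserving strict inclusions, shows it is not left Noetherian either, and in particular not Noetherian. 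The step requiring the most care is the construction of the second generator $B$ together with the verification that it is a nod-path even when $e$ happens to be the special edge at $v$; once the clean-concatenation fact is in place, strictness of the chain is a purely combinatorial statement about prefixes of words in the two equal-length letters $A\neq B$.
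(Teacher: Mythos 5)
Your proof is correct and follows essentially the same route as the paper: the paper also invokes Lemma~\ref{lemimp}, forms $q$ by replacing the first letter of $p$ by $e_1$ (your $B$), and exhibits the strictly ascending chain of left ideals generated by $p,pq,\dots,pq^n$, with strictness following from the same prefix/suffix argument on words in two distinct equal-length blocks. Your only deviations are cosmetic (right ideals plus the involution instead of left ideals, generators $A^nB$ instead of $pq^n$) together with a more explicit verification of the ``clean concatenation'' facts that the paper leaves as routine.
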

\begin{proof}
By Lemma \ref{lemimp}, there is a nod-path $p$ whose first letter is $e_2$ and whose last letter is $e_2^*$ for some $e\in E^1_w$. Let $q$ be the nod-path one gets by replacing the first letter of $p$ by $e_1$. For any $n\in\N$ let $I_n$ be the left ideal generated by the nod-paths $p,pq,\dots,pq^n$. One checks easily that $I_n$ equals the linear span of all nod-paths $o$ such that one of the words $p,pq,\dots,pq^n$ is a suffix of $o$. It follows that 
$I_n\subsetneq I_{n+1}$ (clearly none of the words $p,pq,\dots,pq^n$ is a suffix of $pq^{n+1}$ since $p$ and $q$ have the same length but are distinct; hence $pq^{n+1}\not\in I_n$). 
\end{proof}

\begin{theorem}\label{thm4}
Suppose that $(E,w)$ does not satisfy Condition (LPA). Then $L_K(E,w)$ is not Artinian.
\end{theorem}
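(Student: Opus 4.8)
The plan is to establish the failure of the descending chain condition on left ideals, dually to the proof of Theorem~\ref{thm3}. By the Key Lemma~\ref{lemimp}, fix a nod-path $p$ whose first letter is $e_2$ and whose last letter is $e_2^*$ for some $e\in E^1_w$. The crucial preliminary observation is that the concatenation $p^n$ is again a nod-path for every $n\in\N$. The only new two-letter subwords arising in $p^n$ occur at the junctions between consecutive copies of $p$, and each such junction subword equals $e_2^*e_2$. Since $r(e_2^*)=s(e)=s(e_2)$, the word $p^n$ is a legitimate d-path, and since $e_2^*e_2$ is neither of the form $e^v_i(e^v_j)^*$ nor of the form $e_1^*f_1$, it is not forbidden. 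As all forbidden words have length two, $p^n$ contains no forbidden subword and is therefore a nod-path.

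For each $n\in\N$ let $J_n$ be the left ideal of $L_K(E,w)$ generated by $p^n$. First I would check, exactly as in Theorems~\ref{thm1} and \ref{thm3}, that $J_n$ equals the linear span of all nod-paths having $p^n$ as a suffix. Indeed, the first letter of $p^n$ is $e_2$, which is not the second letter of any forbidden word; hence for any nod-path $a$ the product $a\cdot p^n$ is either zero (when $a$ and $p^n$ are not composable as a d-path) or again a nod-path having $p^n$ as a suffix. In the composable case the only potentially problematic two-letter subword of $ap^n$ straddles the junction between $a$ and $p^n$, and its second letter is $e_2$; as $e_2$ is the second letter of no forbidden word, this subword is not forbidden, so no rewriting is triggered and $a\cdot p^n=ap^n$ in $L_K(E,w)$. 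Conversely every nod-path with suffix $p^n$ is visibly of this form, so the description of $J_n$ follows from the basis Theorem~\ref{thmbasis}.

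With this description in hand, the chain $J_1\supseteq J_2\supseteq\cdots$ is strictly descending. Since $p^n$ is a suffix of $p^{n+1}$, every nod-path having $p^{n+1}$ as a suffix also has $p^n$ as a suffix, whence $J_{n+1}\subseteq J_n$. On the other hand $p^n\in J_n$ while $p^n\notin J_{n+1}$: as $|p^{n+1}|>|p^n|$, the longer word $p^{n+1}$ cannot be a suffix of $p^n$, so $p^n$ is not among the nod-paths spanning $J_{n+1}$. Thus $J_n\supsetneq J_{n+1}$ for all $n\in\N$, the descending chain condition on left ideals fails, and $L_K(E,w)$ is not Artinian.

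The step I expect to require the most care is the identification of $J_n$ with the linear span of the nod-paths having $p^n$ as a suffix; everything rests on the fact that $e_2$ never occurs as the second letter of a forbidden word, so that left multiplication by a basis element introduces no cancellation at the junction and no rewriting back into shorter nod-paths is triggered. Once this is secured, the strictness of the chain is immediate from the length comparison $|p^{n+1}|>|p^n|$.
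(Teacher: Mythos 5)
Your proposal is correct and follows essentially the same route as the paper: take the nod-path $p$ from the Key Lemma, show the left ideal generated by $p^n$ is the span of nod-paths with suffix $p^n$ (using that $e_2$ is not the second letter of any forbidden word), and conclude strictness of the descending chain from the length comparison. The only difference is that you spell out the verifications the paper leaves as ``one checks easily,'' and you do so correctly.
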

\begin{proof}
By Lemma \ref{lemimp}, there is a nod-path $p$ whose first letter is $e_2$ and whose last letter is $e_2^*$ for some $e\in E^1_w$. For any $n\in\N$ let $I_n$ be the left ideal generated by $p^n$. One checks easily that $I_n$ equals the linear span of all nod-paths $o$ such that $p^n$ is a suffix of $o$. Hence $I_n\supsetneq I_{n+1}$ (clearly $p^{n+1}$ is not a suffix of $p^n$ and hence $p^n\not\in I_{n+1}$).
\end{proof}

\begin{theorem}\label{thm5}
Suppose that $(E,w)$ does not satisfy Condition (LPA). Then $L_K(E,w)$ is not von Neumann regular.
\end{theorem}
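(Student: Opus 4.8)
The plan is to exhibit a single element of $L_K(E,w)$ that fails to be von Neumann regular, exactly in the spirit of the preceding theorems. Recall that a (possibly non-unital) ring $L$ is von Neumann regular iff every element $a$ satisfies $a\in aLa$, i.e. $a=axa$ for some $x$; so it suffices to produce one element $a$ with $a\notin aL_K(E,w)a$. By the Key Lemma \ref{lemimp} there is a nod-path $p$ whose first letter is $e_2$ and whose last letter is $e_2^*$ for some $e\in E^1_w$, and I would take $a=p$. Note that $s(p)=r(p)=s(e)$ and that $|p|\ge 2$, since the first and last letters $e_2$ and $e_2^*$ occupy distinct positions.

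The heart of the argument is the claim that for every nod-path $m$, the product $pmp$ is either $0$ or a nod-path of length $2|p|+|m|$. Indeed, $pmp=0$ unless $s(m)=r(m)=s(e)$, in which case $pmp$ is a legitimate d-path. To see that the concatenated word $pmp$ is already reduced (contains no forbidden subword), I would observe that $p$ and $m$ are themselves nod-paths, so the only adjacent pairs of letters in $pmp$ that are not already internal to $p$ or to $m$ are the two junction pairs $e_2^*\ell$ and $\ell' e_2$, where $\ell$ and $\ell'$ are the first and last letters of $m$ (when $m$ is the vertex $s(e)$ the single junction pair is $e_2^*e_2$ and $pmp=p^2$). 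Neither pair is forbidden: a forbidden word of the form $e^v_i(e^v_j)^*$ begins with an unstarred letter and ends with a starred one, while a forbidden word $g_1^*h_1$ is built from index-$1$ letters; but $e_2^*$ is a starred index-$2$ letter and $e_2$ an unstarred index-$2$ letter, so $e_2^*\ell$ is forbidden of neither type, and likewise $\ell' e_2$. Hence no reduction takes place and $pmp$ is the nod-path of length $2|p|+|m|\ge 2|p|$.

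Expanding an arbitrary $x\in L_K(E,w)$ in the nod-path basis of Theorem \ref{thmbasis} and using bilinearity, it follows that $pL_K(E,w)p$ is contained in the $K$-span of nod-paths of length at least $2|p|$. Since $p$ is itself a basis nod-path of length $|p|<2|p|$ and the nod-paths are linearly independent, I conclude $p\notin pL_K(E,w)p$. Thus $p$ is not a regular element, and $L_K(E,w)$ is not von Neumann regular.

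The step I expect to require the most care is the verification that the two junction pairs are not forbidden; this is precisely where the hypothesis $e\in E^1_w$ (so that $e_2$ and $e_2^*$ carry the index $2$) is used, and it is what ``shields'' the interior of $p$ from any cancellation when $p$ is multiplied on either side. Once this shielding is in place, the length bound and the conclusion follow immediately from the basis theorem, with no further computation.
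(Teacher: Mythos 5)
Your proposal is correct and takes essentially the same route as the paper: the paper's proof also applies the Key Lemma to obtain the nod-path $p$ and observes that $pxp$ is a linear combination of nod-paths of length $\geq 2|p|$, so that $pxp=p$ has no solution. Your explicit check that the junction pairs $e_2^*\ell$ and $\ell' e_2$ are forbidden of neither type (because $e_2$ and $e_2^*$ carry index $2$) is exactly the detail the paper leaves as ``one checks easily.''
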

\begin{proof}
By Lemma \ref{lemimp}, there is a nod-path $p$ whose first letter is $e_2$ and whose last letter is $e_2^*$ for some $e\in E^1_w$. One checks easily that for any $x\in L_K(E,w)$, $pxp$ is a linear combination of nod-paths of length $\geq 2|p|$. Hence the equation $pxp=p$ has no solution $x\in L_K(E,w)$.
\end{proof}

We recall some general facts on the growth of algebras. Let $A\neq\{0\}$ be a finitely generated $K$-algebra. Let $V$ be a {\it finite-dimensional generating subspace} of $A$, i.e. a finite-dimensional subspace of $A$ that generates $A$ as a $K$-algebra. For $n\geq 1$ let $V^n$ denote the linear span of the set $\{v_1\dots v_k\mid k\leq n, v_1,\dots,v_k\in V\}$. Then 
\[V =V^1\subseteq V^2\subseteq V^3\subseteq \dots, \quad A =\bigcup\limits_{n\in \N}V^n\text{ and }d_V(n):=\dim V^n<\infty.\] 
Given functions $f, g:\N\rightarrow \R^+$, we write $f\preccurlyeq g$ if there is a $c\in\N$ such that $f(n)\leq cg(cn)$ for all $n$. If $f\preccurlyeq g$ and $g\preccurlyeq f$, then the functions $f, g$ are called {\it asymptotically equivalent} and we write $f\sim g$. If $W$ is another finite-dimensional generating subspace of $A$, then $d_V\sim d_W$. The {\it Gelfand-Kirillov dimension} or {\it GK dimension} of $A$ is defined as
\[\GKdim A := \limsup\limits_{n\rightarrow \infty}\log_nd_V(n).\]
The definition of the GK dimension does not depend on the choice of the finite-dimensional generating subspace $V$. If $d_V\preccurlyeq n^m$ for some $m\in \N$, then $A$ is said to have {\it polynomial growth} and we have $\GKdim A \leq m$. If $d_V\sim a^n$ for some real number $a>1$, then $A$ is said to have {\it exponential growth} and we have $\GKdim A =\infty$. If $A$ does not happen to be finitely generated over $K$, then the GK dimension of $A$ is defined as
\[\GKdim(A) := \sup\{\GKdim(B)\mid B \text{ is a finitely generated subalgebra of }A\}.\]
For the algebra $A=\{0\}$ we set $\GKdim A:=0$.

\begin{theorem}\label{thm6}
Suppose that $(E,w)$ does not satisfy Condition (LPA). Then $\GKdim(L_K(E,w))=\infty$.
\end{theorem}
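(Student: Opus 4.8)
The plan is to produce a two-generated subalgebra of $L_K(E,w)$ of exponential growth, which immediately forces $\GKdim(L_K(E,w))=\infty$. As in the proofs of Theorems \ref{thm3} and \ref{thm4}, I would first invoke the Key Lemma \ref{lemimp} to obtain a nod-path $p$ of length $\ell:=|p|$ whose first letter is $e_2$ and whose last letter is $e_2^*$ for some $e\in E^1_w$. Since $s(e_2)=s(e)=r(e_2^*)$, the d-path $p$ is closed at $s(e)$. As in the proof of Theorem \ref{thm3}, let $q$ be obtained from $p$ by replacing its first letter $e_2$ by $e_1$; then $q$ is again a nod-path of length $\ell$, closed at $s(e)$. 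Indeed, the only length-$2$ subword of $q$ that does not already occur in $p$ is $e_1x_2$ (where $x_2$ is the second letter of $p$), and this word is forbidden precisely when $e=e^{s(e)}$ is special and $x_2=e_j^*$ for some $j$; but that same condition would also make $e_2x_2$ forbidden, contradicting that $p$ is a nod-path.

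The heart of the argument is the claim that every word $w_1\cdots w_m$ with $w_1,\dots,w_m\in\{p,q\}$ is again a nod-path. Each such word is a d-path, since at every junction the range of the preceding letter $e_2^*$ equals $s(e)$ and so does the source of the following letter ($e_2$ or $e_1$). As forbidden words have length two, the only length-$2$ subwords that can newly appear in $w_1\cdots w_m$ are the two junction words $e_2^*e_2$ and $e_2^*e_1$. Neither is forbidden: a forbidden word of the form $e_1^*f_1$ carries the index $1$ on its starred letter, whereas here the starred letter is $e_2^*$, and a forbidden word of the form $e^v_i(e^v_j)^*$ begins with an unstarred letter. Hence $w_1\cdots w_m$ has no forbidden subword and is a nod-path.

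Because $p$ and $q$ have the common length $\ell$ and differ only in their first letter, the assignment $(w_1,\dots,w_m)\mapsto w_1\cdots w_m$ is injective, so the $2^m$ words of this form are pairwise distinct; by Theorem \ref{thmbasis} they are linearly independent in $L_K(E,w)$. Let $B$ be the subalgebra generated by $p$ and $q$ and put $V:=Kp+Kq$, a finite-dimensional generating subspace of $B$. Then $V^n$ contains the $2^n$ linearly independent words of length $n\ell$ built from $p$ and $q$, so $d_V(n)\geq 2^n$ and $\log_n d_V(n)\geq n\log_n 2\to\infty$; thus $B$ has exponential growth and $\GKdim(B)=\infty$. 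Since the Gelfand-Kirillov dimension cannot decrease when passing to $L_K(E,w)\supseteq B$ (when $L_K(E,w)$ is not finitely generated this is immediate from the defining supremum over finitely generated subalgebras, and in general it is the standard monotonicity of GK dimension under subalgebras), we conclude $\GKdim(L_K(E,w))=\infty$.

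I expect the only real obstacle to be the verification that arbitrary concatenations $w_1\cdots w_m$ contain no forbidden subword; once the two junction words $e_2^*e_2$ and $e_2^*e_1$ have been checked against both families of forbidden words, the exponential lower bound on $d_V$ and the conclusion about the GK dimension follow routinely.
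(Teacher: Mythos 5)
Your proof is correct, and its combinatorial core is the same as the paper's: invoke the Key Lemma to get the nod-path $p$ ending in $e_2^*$, form $q$ by changing the first letter to $e_1$, and count concatenations of $p$ and $q$ to exhibit exponential growth (your $2^n$ words $w_1\cdots w_n$ with $w_i\in\{p,q\}$ are exactly the paper's alternating-block words $p^{i_1}q^{i_2}\cdots$, just parametrised differently). Where you genuinely diverge is in the global structure: the paper splits into the cases $E^0$ finite and $E^0$ infinite, handling the latter by passing to a finite complete weighted subgraph $(\tilde E,\tilde w)$ violating (LPA) and arguing that the induced map $L_K(\tilde E,\tilde w)\to L_K(E,w)$ is a monomorphism after a careful choice of special edges. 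You avoid that entire reduction by working inside the two-generated subalgebra $B=\langle p,q\rangle$ with generating subspace $Kp+Kq$ and then appealing to monotonicity of GK dimension under subalgebras; this is cleaner and removes the dependence on the complete-subgraph machinery from \cite{hazrat13}. You also supply two verifications the paper leaves implicit, namely that $q$ is again a nod-path and that the junction words $e_2^*e_2$ and $e_2^*e_1$ are not forbidden, and both checks are carried out correctly.
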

\begin{proof}
Suppose first that $(E,w)$ is finite (in our setting that means that $E^0$ is a finite set). By Lemma \ref{lemimp}, there is a nod-path $p$ in $( E, w)$ whose first letter is $e_2$ and whose last letter is $e_2^*$ for some $e\in  E^1_w$. Let $q$ be the nod-path one gets by replacing the first letter of $p$ by $e_1$. Let $n\in \N$. Consider the nod-paths
\begin{equation}
p^{i_1}q^{i_2}\dots p^{i_{k-1}}q^{i_{k}}~ (k\text{ even}), \text{ and }p^{i_1}q^{i_2}\dots p^{i_{k-2}}q^{i_{k-1}}p^{i_{k}}~(k\text{ odd})
\end{equation}
where $k,i_1,\dots,i_k\in \N$ satisfy
\begin{equation}
(i_1+\dots+i_k)|p|\leq n.
\end{equation}
Clearly different solutions $(k,i_1,\dots,i_k)$ and $(k',i'_1,\dots,i'_{k'})$ of inequality (2) correspond to different nod-paths in (1) since $|p|=|q|$. Let $V$ denote the finite-dimensional subspace of $L_K( E, w)$ spanned by $\{v,f_i,f_i^*\mid v\in  E^0, f\in  E_1, 1\leq i\leq  w(f)\}$. By Theorem \ref{thmbasis} the nod-paths in (1) are linearly independent in $V^n$. The number of solutions of (2) is $\sim 2^n$ and hence $L_K( E, w)$ has exponential growth.\\
Now suppose that $(E,w)$ is not finite. One checks easily that there is a finite complete weighted subgraph $(\tilde E, \tilde w)$ of $(E,w)$ that does not satisfy Condition (LPA) (see \cite[p. 884 and Proof of Lemma 5.19]{hazrat13}). By the previous paragraph $L_K( \tilde E, \tilde w)$ has exponential growth. Clearly the inclusion $(\tilde E,\tilde w)\hookrightarrow (E,w)$ induces an algebra monomorphism $L_K(\tilde E,\tilde w)\rightarrow L_K(E,w)$ since one can choose the special edges such that distinct nod-paths are mapped to distinct nod-paths. Hence $L_K(E,w)$ has a finitely generated subalgebra with exponential growth. It follows from the definition of the GK dimension that $\GKdim L_K(E,w)=\infty$.
\end{proof}

The main result of this section is Theorem \ref{thmm2}. In order to prove it we need two lemmas.

\begin{lemma}\label{lemidem}
Let $p$ be a nod-path starting with $e_2$ and ending with $e_2^*$ for some $e\in E^1_w$. Then the ideal $I$ of $L_K(E,w)$ generated by $p$ contains no nonzero idempotent.
\end{lemma}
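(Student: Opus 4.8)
The plan is to work with the nod-path basis of Theorem \ref{thmbasis} together with the explicit description of $I$ extracted in the proof of Theorem \ref{thm1}: since $e_2$ is never the second letter of a forbidden word and $e_2^*$ never the first, the ideal $I$ is exactly the $K$-linear span of those nod-paths that contain $p$ as a subword. In particular every nonzero element of $I$ is a combination of nod-paths of length at least $|p|\ge 2$, so $I$ contains no vertex and, more to the point, no short elements at all. I would argue by contradiction: assuming $a=a^2$ is a nonzero idempotent in $I$, the goal is to exhibit in $a^2$ a nod-path which is too long to occur in $a$, contradicting $a^2=a$.

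First I would set up a length filtration. Inspecting relations (iii) and (iv) of Definition \ref{def3}, every rewriting step replaces a forbidden two-letter word either by a vertex (dropping the length by one) or by an index-shifted two-letter word of the same length; hence reduction to nod-path form is length-non-increasing, and $F_n:=\operatorname{span}\{\text{nod-paths of length}\le n\}$ is an algebra filtration with $F_mF_n\subseteq F_{m+n}$. Writing $M$ for the maximal length occurring in $a$ and $a_M$ for the top-length part, the length-$2M$ component of $a^2$ is computed purely in the associated graded algebra and equals the top symbol of $a_Ma_M$: the concatenation of two length-$M$ nod-paths either remains a nod-path (when the junction letters form no forbidden word), or is rewritten as an index-shifted sum of nod-paths of the same length $2M$, with all letters away from the junction left untouched.

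The heart of the matter is to show that this length-$2M$ part does not vanish, which would force $2M\le M$ and hence the contradiction $M\le 0$. This is the phenomenon already used in the proof of Theorem \ref{thm5}, where the weighted letters at the two ends of $p$ obstruct every collapse and guarantee that $pxp$ only involves nod-paths of length $\ge 2|p|$. I would try to transplant that obstruction: because each nod-path in the support of $a_M$ carries the embedded block $p=e_2\cdots e_2^*$, and because the index-$2$ letters $e_2,e_2^*$ are precisely the ones immune to the length-dropping instances of relations (iii) and (iv), an extremal concatenation should resist shortening. Concretely, the plan is to fix a monomial order refining length, select the $\prec$-largest nod-path $\omega$ in the support of $a_M$, and check that the junction created when $\omega$ is composed with a suitable support term is \emph{rigid} — neither a forbidden word nor an index-shift image — so that the resulting length-$2M$ nod-path occurs in $a^2$ with coefficient a single nonzero scalar product and cannot cancel.

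The main obstacle is exactly this non-cancellation step. A priori the top symbol $a_Ma_M$ can vanish, either through cancellation or through the junction collapsing: the identity $(ee^*)^2=ee^*$ shows that idempotents whose top symbol squares to zero do exist as soon as no weighted letter is present. Thus the argument must genuinely use that the protected index-$2$ letters of the embedded $p$ forbid such collapses, and the delicate point is to rule out that every maximal-length term of $a$ is, say, non-composable with itself or meets only collapsing junctions. An alternative route that avoids choosing an order is to prove a quantitative strengthening of the estimate behind Theorem \ref{thm5}, namely that the minimal length of the nod-paths occurring in $a^n$ grows without bound in $n$; since $a=a^n$ for every $n$, this again forces $a=0$. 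In either formulation the crux is to control the length-dropping relations at the junctions, and it is the weightedness of $e$ that is expected to make this control possible.
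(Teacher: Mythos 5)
Your plan stalls exactly where you say it does, and the obstacle you flag is not a technical nuisance but a sign that the length filtration is the wrong tool. The top-length part of $a^2$ really can vanish: the junction between two maximal-length nod-paths in the support of $a$ is formed by the \emph{last} letter of one and the \emph{first} letter of the other, and nothing forces these to be the protected letters $e_2^*$, $e_2$ --- a support monomial of $I$ merely \emph{contains} $p$ as a subword, so its end letters are arbitrary, the junction can be a forbidden word, and the resulting collapse can cascade inward. In addition, distinct pairs of support monomials can produce the same length-$2M$ nod-path with cancelling coefficients. Neither your monomial-order selection nor the alternative ``minimal length of $a^n$ grows'' route comes with an argument ruling out these failure modes (the minimal length of the support of a product can also drop at a collapsing junction), so the proposal is genuinely incomplete at its decisive step.

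The paper's proof replaces length by the invariant your own remarks are pointing at: for a nod-path $q$ let $m(q)$ be the maximal number of pairwise non-overlapping occurrences of $p$ as a subword of $q$. Because $e_2$ is never the second letter of a forbidden word and $e_2^*$ never the first, every rewriting cascade launched at a junction stops when it reaches an embedded copy of $p$; hence for nod-paths $q_r,q_s$ every nod-path $o$ occurring in the normal form of $q_rq_s$ satisfies $m(o)\geq m(q_r)+m(q_s)$. Writing $a=\sum_r k_rq_r$ with all $m(q_r)\geq 1$ and choosing $r_{\min}$ with $m(q_{r_{\min}})$ minimal, every monomial in the support of $a^2$ has $m\geq 2m(q_{r_{\min}})>m(q_{r_{\min}})$, so the basis element $q_{r_{\min}}$, which does occur in $a$, cannot occur in $a^2$, whence $a^2\neq a$. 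The point is that this compares a \emph{lower} bound valid for every monomial that could possibly appear in $a^2$ against a monomial actually present in $a$, so no non-cancellation of a top symbol is ever needed --- precisely the step your approach cannot supply.
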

\begin{proof}
For a nod-path $q=x_1\dots x_n$ define $m(q)$ as the largest nonnegative integer $m$ such that there are indices $i_1,\dots,i_m\in \{1,\dots,n\}$ such that $i_j+|p|-1<i_{j+1}~(1\leq j \leq m-1)$, $i_m+|p|-1\leq n$ and $x_{i_j}\dots x_{i_j+|p|-1}=p~(1\leq j \leq m)$. Hence $m(q)$ is maximal with the property that $q$ contains $m(q)$ not overlapping copies of $p$.\\
Now let $a\in I\setminus \{0\}$. By Theorem \ref{thmbasis} we can write $a=\sum \limits_{r=1}^{t}k_rq_r$ where $k_1,\dots,k_t\in K\setminus\{0\}$ and $q_1,\dots,q_t$ are pairwise distinct nod-paths. Clearly $m(q_r)\geq 1$ for any $1\leq r\leq t$, since $I$ consists of all linear combinations of nod-paths containing $p$ as a subword. It easy to show, using the fact that $e_2$ is not the second letter of a forbidden word and $e_2^*$ not the first letter of a forbidden word, that for any $1\leq r,s\leq t$ the product $q_rq_s$ is a linear combination of nod-paths $o$ such that $m(o)\geq m(q_r)+m(q_s)$ (cf. \cite[Proof of Proposition 40]{hazrat-preusser}). It follows that $a^2=\sum \limits_{r,s=1}^{t} k_rk_sq_rq_s$ is a linear combination of nod-paths $o$ such that $m(o)\geq 2m(q_{r_{\min}})>m(q_{r_{\min}})$ where $1\leq r_{\min}\leq t$ is chosen such that $m(q_{r_{\min}})$ is minimal. Hence $a^2$ is a linear combination of nod-paths none of which equals $q_{r_{\min}}$. Thus $a^2$ cannot be equal to $a$.
\end{proof}

If $\Lambda$ is an infinite set and $S$ is a unital ring, then we denote by $M_{\Lambda}(S)$ the $K$-algebra consisting of all square matrices $M$, with rows and columns indexed by $\Lambda$, with entries from $S$, for which there are at most finitely many nonzero entries in $M$ (cf. \cite[Notation 2.6.3]{abrams-ara-molina}).

\begin{lemma}\label{lemmorita}
Let $\Lambda$ be an infinite set and $S$ a left Noetherian, unital ring. Let $I_1\subseteq I_2\subseteq\dots$ be an ascending chain of left ideals of $M_{\Lambda}(S)$. Suppose there is a finite subset $\Lambda^{\fin}$ of $\Lambda$ such that $\sigma_{\lambda\mu}=0$ for any $n\in\N$, $\sigma\in I_n$, $\lambda\in\Lambda$ and $\mu\in  \Lambda\setminus \Lambda^{\fin}$. Then the chain $I_1\subseteq I_2\subseteq\dots$ eventually stabilises.
\end{lemma}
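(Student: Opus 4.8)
The plan is to exploit the standing hypothesis that every matrix occurring in the chain has nonzero entries only in the finitely many columns indexed by $\Lambda^{\fin}$, and to encode each left ideal by the left $S$-module of its ``rows''; this is a Morita-type reduction to a finitely generated free $S$-module. Set $k:=|\Lambda^{\fin}|$ and let $J$ be the set of all $\sigma\in M_\Lambda(S)$ with $\sigma_{\lambda\mu}=0$ for every $\lambda\in\Lambda$ and every $\mu\in\Lambda\setminus\Lambda^{\fin}$. A direct check shows that $J$ is a left ideal of $M_\Lambda(S)$ (left multiplication cannot create a nonzero entry in a column that is identically zero), and by hypothesis $I_n\subseteq J$ for all $n$. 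For $\sigma\in J$ and $\lambda\in\Lambda$, the vector $(\sigma_{\lambda\mu})_{\mu\in\Lambda^{\fin}}$ is an element of the left $S$-module $S^{\Lambda^{\fin}}\cong S^k$; since the columns outside $\Lambda^{\fin}$ vanish, this vector records all of row $\lambda$, and I will simply call it the $\lambda$-th row of $\sigma$.

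The crucial tool is left multiplication by the matrix units $u_{\lambda\mu}$ (the matrix with $1_S$ in position $(\lambda,\mu)$ and zeros elsewhere), which lie in $M_\Lambda(S)$ precisely because $S$ is unital. For $\sigma\in J$ the product $u_{\lambda_0\lambda}\sigma$ is the matrix whose $\lambda_0$-th row equals the $\lambda$-th row of $\sigma$ and all of whose other rows vanish; more generally $s\,u_{\lambda_0\lambda}\sigma$ scales that relocated row by $s\in S$ on the left. Using these, I would associate to each left ideal $I\subseteq J$ the subset $R(I)\subseteq S^{\Lambda^{\fin}}$ consisting of all vectors arising as some row of some $\sigma\in I$, and prove that $R(I)$ is a left $S$-submodule: closure under left scaling is immediate from multiplication by $s\,u_{\lambda_0\lambda}$, and closure under addition follows by relocating two given rows (coming possibly from different elements of $I$ and from different row indices) to one common index $\lambda_0$ and adding the two resulting elements of $I$. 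The assignment $I\mapsto R(I)$ is evidently order-preserving.

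The heart of the argument is to show that $I$ is recovered from $R(I)$ via
\[
I=\{\sigma\in J\mid\text{every row of }\sigma\text{ lies in }R(I)\}.
\]
The inclusion ``$\subseteq$'' is clear. For ``$\supseteq$'', given $\sigma\in J$ whose nonzero rows $r_1,\dots,r_m\in R(I)$ occur at distinct indices $\lambda_1,\dots,\lambda_m$ (finitely many, since every element of $M_\Lambda(S)$ has finite support), I choose for each $i$ an element $\sigma^{(i)}\in I$ having $r_i$ as its $\lambda_i'$-th row, and then verify that $\sum_{i=1}^{m}u_{\lambda_i\lambda_i'}\sigma^{(i)}\in I$ equals $\sigma$. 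This reconstruction is the step I expect to demand the most care: it relies simultaneously on the finiteness of the support of $\sigma$ and on the observation that distinct rows can be installed at distinct indices independently, the relocated summands not interfering with one another. Granting it, $I\mapsto R(I)$ is injective as well as order-preserving on the left ideals contained in $J$.

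Finally, since $S$ is left Noetherian, the finitely generated free module $S^{\Lambda^{\fin}}\cong S^k$ is a Noetherian left $S$-module, so the ascending chain $R(I_1)\subseteq R(I_2)\subseteq\cdots$ stabilises. Because $I\mapsto R(I)$ is injective and order-preserving, the equality $R(I_n)=R(I_{n+1})$ forces $I_n=I_{n+1}$, whence the original chain $I_1\subseteq I_2\subseteq\cdots$ stabilises as well, as required.
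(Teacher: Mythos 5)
Your proof is correct and follows essentially the same route as the paper: both encode each left ideal by the left $S$-submodule of $S^{|\Lambda^{\fin}|}$ formed by its rows, observe that the ideal is recovered as the set of matrices (supported on the columns of $\Lambda^{\fin}$) all of whose rows lie in that submodule, and then invoke Noetherianity of the finitely generated free module. The only difference is one of exposition: you spell out, via matrix units, the verifications that the row set is a submodule and that the ideal is reconstructed from it, which the paper asserts without detail.
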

\begin{proof}
Write $\Lambda^{\fin}=\{\lambda_1,\dots,\lambda_m\}$. Fix a $\tau\in \Lambda$. For any $n\in\N$, let $N_n$ be the left $S$-submodule of $S^m$ consisting of all row vectors $(\sigma_{\tau\lambda_1},\dots,\sigma_{\tau\lambda_m})$ where $\sigma$ varies over all matrices in $I_{n}$. Then $I_n$ equals the set of all matrices $\sigma\in M_{\Lambda}(S)$ such that $\sigma_{\lambda\mu}=0$ for any $\lambda\in \Lambda,\mu\in \Lambda\setminus \Lambda^{\fin}$ and $(\sigma_{\lambda\lambda_1},\dots,\sigma_{\lambda\lambda_m})\in N_n$ for any $\lambda\in \Lambda$. Since $S$ is a left Noetherian ring, $S^m$ is a Noetherian module. It follows that the chain $N_1\subseteq N_2\subseteq \dots$ eventually stabilises and thus the chain $I_1\subseteq I_2\subseteq \dots$ eventually stabilises.
\end{proof}

\begin{theorem}\label{thmm2} Suppose that $(E,w)$ does not satisfy Condition (LPA). Then $L_K(E,w)$ is not isomorphic to an unweighted Leavitt path algebra.
\end{theorem}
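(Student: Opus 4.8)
The plan is to argue by contradiction. Suppose $L_K(E,w)\cong L_K(F)$ for some graph $F$. By the Key Lemma \ref{lemimp} there is a nod-path $p$ beginning with $e_2$ and ending with $e_2^*$ for some $e\in E^1_w$; let $I$ be the two-sided ideal of $L_K(E,w)$ generated by $p$. As noted in the proof of Theorem \ref{thm1}, $I$ is a nonzero proper ideal, and by Lemma \ref{lemidem} it contains no nonzero idempotent. Since $I$ is generated by the single element $p$, its image $J$ under the isomorphism is a nonzero proper \emph{principal} ideal of $L_K(F)$ which again contains no nonzero idempotent, idempotent-freeness being an isomorphism invariant.

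Next I would invoke the standard structure theory of unweighted Leavitt path algebras (\cite{abrams-ara-molina}) to pin down where $J$ can live. Every graded ideal of $L_K(F)$ is generated by idempotents (vertices together with breaking-vertex idempotents), so the largest graded ideal contained in $J$ is zero; that is, $J$ has trivial graded part. An ideal with trivial graded part is contained in the ideal generated by the vertices lying on cycles without exits, and for a cycle $c$ without exits based at a vertex $v$ the ideal $I(v)$ is isomorphic to $M_{\Lambda_c}(K[x,x^{-1}])$. Because $J$ is principal, its generator meets only finitely many of these blocks, so $J$ is contained in a finite direct sum $M:=\bigoplus_{j=1}^N M_{\Lambda_j}(K[x,x^{-1}])\cong M_\Lambda(S)$ with $\Lambda=\bigsqcup_j\Lambda_j$ and $S=K[x,x^{-1}]$. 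The ring $S$ is unital and Noetherian (indeed a principal ideal domain), exactly the hypothesis needed for Lemma \ref{lemmorita} (the case of finite $\Lambda$, where $M_\Lambda(S)$ is itself Noetherian, being even easier).

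Now I would produce the contradiction. Following the proof of Theorem \ref{thm3}, let $q$ be the nod-path obtained from $p$ by replacing its first letter $e_2$ with $e_1$, and for each $n$ let $I_n$ be the left ideal of $L_K(E,w)$ generated by $p,pq,\dots,pq^n$; these form a strictly ascending chain $I_1\subsetneq I_2\subsetneq\cdots$ contained in $I$. Transporting along the isomorphism gives a strictly ascending chain $J_1\subsetneq J_2\subsetneq\cdots$ of left ideals of $L_K(F)$ contained in $J\subseteq M_\Lambda(S)$. On the other hand, both $p$ and $q$ are closed d-paths based at the single vertex $s(e)$ in the double graph, so every generator $pq^k$ is again a closed d-path based at $s(e)$; under the identification $M\cong M_\Lambda(S)$ this should force the ``ghost halves'' of all these elements, and hence the columns of all matrices occurring in the $J_n$, to range over one fixed finite subset $\Lambda^{\fin}$ of $\Lambda$ (recall that left multiplication preserves column support, and that the $L_K(F)$-action on $M$ reduces to the $M$-action via local units). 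The chain $J_1\subseteq J_2\subseteq\cdots$ would then satisfy the hypotheses of Lemma \ref{lemmorita} and so must eventually stabilise, contradicting strict ascent. Hence no such $F$ exists.

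The hard part will be the penultimate step: making rigorous the claim that, after transporting to $L_K(F)$ and identifying the ambient ideal with $M_\Lambda(S)$, the chain $\{J_n\}$ has uniformly bounded column support. This requires the explicit isomorphism $I(v)\cong M_{\Lambda}(K[x,x^{-1}])$ in enough detail to see that the reduced ghost paths attached to the closed d-paths $pq^k$ stay in a fixed finite set as $k\to\infty$, the growing length of $pq^k$ being absorbed into higher powers of the cycle (the exponent of $x$) rather than into new columns. Establishing this uniform-support statement, together with the routine verifications that $J$ has trivial graded part and meets only finitely many cycle-blocks, is where the real work lies; once it is in place, Lemmas \ref{lemidem} and \ref{lemmorita} combine to give the contradiction.
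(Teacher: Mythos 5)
Your overall architecture is the same as the paper's: produce $p$ via the Key Lemma, use Lemma \ref{lemidem} together with \cite[Proposition 2.7.9]{abrams-ara-molina} to force the image of the ideal $(p)$ into $I(P_c(F))\cong\bigoplus_{i\in\Gamma}M_{\Lambda_i}(K[x,x^{-1}])$, transport a strictly ascending chain of left ideals into that direct sum, and contradict Lemma \ref{lemmorita}. The genuine gap is exactly the step you defer to the end: the uniform bound on column support. Moreover, the route you propose for closing it --- unwinding the explicit isomorphism $I(v)\cong M_{\Lambda}(K[x,x^{-1}])$ and tracking the ``reduced ghost halves'' of the closed d-paths $pq^k$ --- would not work, because $\phi$ is an arbitrary algebra isomorphism onto $L_K(F)$: the nod-path combinatorics of $(E,w)$ is not carried to any path combinatorics in $F$, so the ``ghost half'' of $pq^k$ has no meaning inside a block $M_{\Lambda_i}(K[x,x^{-1}])$. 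What actually closes the gap is a one-line formal observation requiring no knowledge of the isomorphism: in any matrix ring the column support of $ab$ (the set of indices of columns with a nonzero entry) is contained in that of $b$. Since $pq^k=(pq^{k-1})\cdot q$ for $k\geq 1$, every generator of your $J_n$ is either $p$ itself or a left multiple of $q$, hence every element of the block component $J_{n,i}$ has column support inside the fixed finite set consisting of the nonzero columns of $p_i$ and of $q_i$; Lemma \ref{lemmorita} then applies. The paper arranges this even more cleanly by taking $q$ to be $p$ with its \emph{last} letter replaced by $e_1^*$ and generators $qp^2,\dots,qp^{n+1}$, so that every generator is a left multiple of the single element $p$ and $\Lambda_i^{\fin}$ is just the column support of $p_i$; the same factorisation $(qp^n)_i=(qp^{n-1})_i\,p_i$ also yields the reduction to finitely many blocks $i\in\Gamma$, which you obtain instead from $(pq^k)_i=p_i(q^k)_i$.

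Two smaller corrections. First, $\bigoplus_{j=1}^N M_{\Lambda_j}(K[x,x^{-1}])$ is \emph{not} isomorphic to $M_{\bigsqcup_j\Lambda_j}(K[x,x^{-1}])$, and a left ideal of the block-diagonal algebra need not be a left ideal of the full matrix algebra over the disjoint union; you must decompose $J_n=\bigtimes_{i}J_{n,i}$ via local units and apply Lemma \ref{lemmorita} block by block, as the paper does. Second, your observation that only one application of Lemma \ref{lemidem} (to the two-sided ideal generated by $p$) is needed is sound, since each $pq^k$ already lies in that ideal and strictness of the chain is preserved by the injectivity of $\phi$.
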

\begin{proof}
Assume there is a graph $F$ and an isomorphism $\phi:L_K(E,w)\rightarrow L_K(F)$. By Lemma \ref{lemimp}, there is a nod-path $p$ whose first letter is $e_2$ and whose last letter is $e_2^*$ for some $e\in E^1_w$. Let $q$ be the nod-path one gets by replacing the last letter of $p$ by $e_1^*$. By Lemma \ref{lemidem}, the ideal $I$ of $L_K(E,w)$ generated by $p$ contains no nonzero idempotent. Similarly, for any $n\in \N$, the ideal $I_n$ of $L_K(E,w)$ generated by $qp^n$ contains no nonzero idempotent. It follows from \cite[Proposition 2.7.9]{abrams-ara-molina}, that $\phi(I),\phi(I_n)\subseteq I(P_c(F))~(n\in\N)$ where $I(P_c(F))$ is the ideal of $L_K(F)$ generated by all vertices in $F^0$ which belong to a cycle without an exit. It follows that $\phi(p),\phi(qp^n)\in I(P_c(F))~(n\in\N)$. By \cite[Theorem 2.7.3]{abrams-ara-molina} we have 
\begin{equation}
I(P_c(F))\cong \bigoplus\limits_{i\in \Gamma}M_{\Lambda_i}(K[x,x^{-1}])
\end{equation}
as a $K$-algebra. The sets $\Gamma$ and $\Lambda_i~(i\in \Gamma)$ in (3) might be infinite if $F$ is not finite. \\
It follows from the previous paragraph that there is a subalgebra $A$ of $L_K(E,w)$ such that $p,qp^n\in A~(n\in \N)$ and $A\cong \bigoplus\limits_{i\in \Gamma}M_{\Lambda_i}(K[x,x^{-1}])$. For any $n\in \N$ let $J_n$ be the left ideal of $A$ generated by $qp^2,\dots,qp^{n+1}$. Then $J_n$ is contained in the linear span of all nod-paths $o$ such that one of the words $qp^2,\dots,qp^{n+1}$ is a suffix of $o$. It follows that 
$J_n\subsetneq J_{n+1}$ (clearly none of the words $qp^2,\dots,qp^{n+1}$ is a suffix of $qp^{n+2}$ since $p$ and $q$ have the same length but are distinct). If the sets $\Gamma$ and $\Lambda_i~(i\in \Gamma)$ are finite, then we already have a contradiction since it is well-known that $\bigoplus\limits_{i\in \Gamma}M_{\Lambda_i}(K[x,x^{-1}])$ is Noetherian in this case. Hence the next two paragraphs are only needed if one of the sets $\Gamma$ and $\Lambda_i~(i\in \Gamma)$ is infinite.\\
If $a\in A$, then we identify $a$ with its image in $\bigoplus\limits_{i\in \Gamma}M_{\Lambda_i}(K[x,x^{-1}])$ and write $a_i$ for the $i$-th component of $a$. Set $\Gamma^{\fin}:=\{i\in \Gamma\mid p_i\neq 0\}$. Then $\Gamma^{\fin}$ is a finite subset of $\Gamma$. Clearly $(qp^n)_i=0$ for any $i\in \Gamma\setminus\Gamma^{\fin}$ and $n\geq 2$ (since $(qp^n)_i=(qp^{n-1}p)_i=(qp^{n-1})_ip_i$ for any $n\geq 2$). Hence we can reduce to the case that $\Gamma$ is finite.\\
For any $n\in \N$ and $i\in \Gamma$, let $J_{n,i}$ be the left ideal of $M_{\Lambda_i}(K[x,x^{-1}])$ generated by $(qp^2)_i,\dots,(qp^{n+1})_i$. Then $J_n=\bigtimes\limits_{i\in\Gamma}J_{n,i}$ since each $M_{\Lambda_i}(K[x,x^{-1}])$ has local units. Now fix an $i\in \Gamma$. Let $\Lambda_i^{\fin}$ be the finite subset of $\Lambda_i$ consisting of all $\lambda\in \Lambda_i$ such that the $\lambda$-th column of $p_i$ has a nonzero entry. Then clearly $\sigma_{\lambda\mu}=0$ for any $n\in \N$, $\sigma\in J_{n,i}$, $\lambda\in\Lambda_i$ and $\mu\in\Lambda_i\setminus \Lambda_i^{\fin}$ (since any element of $J_{n,i}$ is a left multiple of $p_i$). Hence, by Lemma \ref{lemmorita}, the chain $J_{1,i}\subseteq J_{2,i}\subseteq \dots$ eventually stabilises. Since this holds for any $i\in \Gamma$, we get the contradiction that the chain $J_{1}\subseteq J_{2}\subseteq \dots$ eventually stabilises. 
\end{proof}

\section{Summary}

\begin{theorem}
Let $(E,w)$ be a row-finite weighted graph and $K$ a field. Then $L_K(E,w)$ is isomorphic to an unweighted Leavitt path algebra iff $(E,w)$ satisfies Condition (LPA) (see Definition \ref{defLPA}).
\end{theorem}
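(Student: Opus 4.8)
The plan is to recognise that this final theorem is exactly the assembly of the two principal results already established, namely Theorem \ref{thmm} and Theorem \ref{thmm2}, so that the argument reduces to splicing together a sufficiency statement and a necessity statement. I would phrase it as an equivalence with two implications and treat each separately.

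First I would establish that Condition (LPA) is sufficient. Suppose $(E,w)$ satisfies Condition (LPA). Then Theorem \ref{thmm} asserts directly that $L_K(E,w)$ is isomorphic to an unweighted Leavitt path algebra. (Internally this passes through Lemma \ref{lemkey1}, which replaces $(E,w)$ by a weighted graph $(\tilde E,\tilde w)$ with $L_K(\tilde E,\tilde w)\cong L_K(E,w)$ in which the ranges of the weighted edges are sinks and no vertex emits or receives two distinct weighted edges, followed by Lemma \ref{lemkey2}, which turns such a graph into an ordinary graph $\tilde E$ with $L_K(E,w)\cong L_K(\tilde E)$; but at this level I would simply cite Theorem \ref{thmm}.)

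For the converse I would argue by contraposition, which is precisely the content of Theorem \ref{thmm2}. Suppose $(E,w)$ does \emph{not} satisfy Condition (LPA). Then Theorem \ref{thmm2} guarantees that $L_K(E,w)$ is not isomorphic to any unweighted Leavitt path algebra. Reading this contrapositively, if $L_K(E,w)$ is isomorphic to an unweighted Leavitt path algebra then $(E,w)$ must satisfy Condition (LPA). Combining the two implications gives the asserted biconditional.

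The genuine difficulty lies entirely upstream, in the proofs of the two cited theorems rather than in this summary statement. The sufficiency direction is a fairly explicit, if lengthy, construction of mutually inverse algebra homomorphisms via the universal properties. The harder necessity direction rests on the Key Lemma \ref{lemimp}, which produces a nod-path from $e_2$ to $e_2^*$ whenever Condition (LPA) fails, together with Lemma \ref{lemidem} (the ideal it generates contains no nonzero idempotent) and the Morita/Noetherianity estimate of Lemma \ref{lemmorita}, which together obstruct any isomorphism onto an unweighted Leavitt path algebra. At the level of the summary theorem, however, there is nothing further to prove: the statement follows immediately by invoking Theorems \ref{thmm} and \ref{thmm2}.
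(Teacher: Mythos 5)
Your proposal is correct and matches the paper exactly: the paper's proof of this summary theorem is simply ``Follows from the Theorems \ref{thmm} and \ref{thmm2}'', with sufficiency given by Theorem \ref{thmm} and necessity by the contrapositive of Theorem \ref{thmm2}, just as you describe. Nothing further is needed.
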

\begin{proof}
Follows from the Theorems \ref{thmm} and \ref{thmm2}.
\end{proof}

\begin{theorem}
Let $(E,w)$ be a row-finite weighted graph and $K$ a field. If $L_K(E,w)$ is simple, or graded simple, or locally finite, or Noetherian, or Artinian, or von Neumann regular, or has finite GK dimension, then $L_K(E,w)$ is isomorphic to an unweighted Leavitt path algebra.
\end{theorem}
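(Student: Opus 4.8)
The plan is to prove the statement by contraposition, assembling the results of Section 6 with Theorem \ref{thmm} from Section 5. The key observation is that each of the listed ring-theoretic properties has already been shown (in Theorems \ref{thm1}--\ref{thm6}) to \emph{fail} whenever $(E,w)$ does not satisfy Condition (LPA). Hence the presence of any one of these properties forces Condition (LPA), and Theorem \ref{thmm} then delivers the desired isomorphism.

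Concretely, I would argue as follows. Suppose $L_K(E,w)$ enjoys one of the listed properties, and suppose for contradiction that $(E,w)$ does \emph{not} satisfy Condition (LPA). Then: if $L_K(E,w)$ is simple or graded simple, this contradicts Theorem \ref{thm1}; if it is locally finite, this contradicts Theorem \ref{thm2}; if it is Noetherian, this contradicts Theorem \ref{thm3}; if it is Artinian, this contradicts Theorem \ref{thm4}; if it is von Neumann regular, this contradicts Theorem \ref{thm5}; and if it has finite GK dimension, this contradicts Theorem \ref{thm6}, which asserts $\GKdim(L_K(E,w))=\infty$ in the absence of Condition (LPA). In every case we reach a contradiction, so $(E,w)$ must satisfy Condition (LPA).

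Having established Condition (LPA), I would invoke Theorem \ref{thmm} directly: a row-finite weighted graph satisfying Condition (LPA) has $L_K(E,w)$ isomorphic to an unweighted Leavitt path algebra. This completes the argument.

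I do not anticipate any genuine obstacle here, since all the substantive work is packaged into the earlier results; the present statement is a clean logical assembly. The only point requiring care is the bookkeeping of matching each hypothesis to the precise theorem whose contrapositive rules out the negation of Condition (LPA)---in particular noting that Theorem \ref{thm6} produces \emph{infinite} GK dimension, so that it is \emph{finite} GK dimension which is excluded. One could equivalently observe that this theorem is an immediate corollary of the preceding characterisation theorem in Section 7, but routing the proof explicitly through Theorems \ref{thm1}--\ref{thm6} keeps the logical dependence on each individual property transparent.
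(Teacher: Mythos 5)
Your proposal is correct and matches the paper's proof exactly: the paper also deduces the statement by combining the contrapositives of Theorems \ref{thm1}--\ref{thm6} with Theorem \ref{thmm}. There is nothing to add.
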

\begin{proof}
Follows from the Theorems \ref{thm1},\ref{thm2},\ref{thm3},\ref{thm4},\ref{thm5},\ref{thm6} and \ref{thmm}.
\end{proof}

\end{document}